\colorlet{darkblue}{blue!50!black}
\colorlet{darkblue}{red!100!black}
\newtheorem{theorem}{Theorem}[section]
\newtheorem{proposition}[theorem]{Proposition}
\theoremstyle{plain}
\newtheorem{lemma}[theorem]{Lemma}
\theoremstyle{definition}
\newtheorem{definition}{Definition}[section]
\newtheorem{remark}{Remark}[section]
\newtheorem*{maintheorem*}{Main Theorem}
\newtheorem*{maincorollary*}{Main Corollary}
\newtheorem{hypothesis}[theorem]{Hypothesis}
\newcommand{\norm}[1]{\left\|#1\right\|}
\newcommand{\abs}[1]{\left|#1\right|}
\newcommand{\R}{\ensuremath{\mathbb{R}}}
\newcommand{\N}{\ensuremath{\mathbb{N}}}
\newcommand{\supp}{\ensuremath{\mathrm{supp}\,}}
\newcommand{\ip}[2]{\fourIdx{}{0}{}{\!x}{\mathcal{ X}}}
\def\N{\mathbb{N}}
\def\V{V}
\def\u{\boldsymbol{u}}
\def\v{\boldsymbol{v}}
\def\w{\boldsymbol{w}}
\def\f{\boldsymbol{f}}
\def\h{\boldsymbol{h}}
\def\n{\boldsymbol{n}}
\def\boldphi{\boldsymbol{\phi}}
\def\D{\mathrm{D}}
\def\V{\mathbb{V}}
\def\L{\mathbb{L}}
\def\H{\mathbb{H}}
\def\P{\mathrm{P}}
\def\A{\mathcal{A}}
\def\Lrm{\mathrm{L}}
\def\B{\mathcal{B}}
\def\d{\mathrm{d}}
\def\C{\mathrm{C}}
\def\wi{\widetilde}
\def\mfrak{\mathfrak{m}}
\def\vfrak{\mathfrak{v}}
\def\pfrak{\mathfrak{p}}
\def\qfrak{\mathfrak{q}}
\def\rfrak{\mathfrak{r}}
\def\zfrak{\mathfrak{z}}
\numberwithin{equation}{section} \allowdisplaybreaks
\title[Optimal control problem for 3D critical CBFEs]{\small Optimal control problem associated with three-dimensional critical convective Brinkman-Forchheimer equations}
\keywords{Critical convective Brinkman-Forchheimer equations, Weak solutions, Optimal control, First-order necessary optimality conditions.}
\author[Kush Kinra]{Kush Kinra}
\address[Kush Kinra]{Center for Mathematics and Applications (NOVA Math), NOVA School of Science and Technology (NOVA FCT),	Portugal.}
\email[Kush Kinra]{kushkinra@gmail.com, k.kinra@fct.unl.pt}
\author[Fernanda Cipriano]{Fernanda Cipriano}
\address[Fernanda Cipriano]{
	Center for Mathematics and Applications (NOVA Math) and Department of Mathematics, NOVA School of Science and Technology (NOVA FCT), Portugal.}
\email[Fernanda Cipriano]{cipriano@fct.unl.pt}
\begin{document}

\begin{abstract}
	In this article, we are concerned about the velocity tracking optimal control problem for 3D critical convective Brinkman–Forchheimer equations defined on a  simply connected bounded domain $\mathfrak{D} \subset \mathbb{R}^3$ with $\mathrm{C}^2$-boundary $\partial\mathfrak{D}$. The control is introduced through an external force. The objective is to optimally minimize a velocity tracking cost functional, for which the velocity vector field is oriented towards a target velocity.  Most importantly, we are concerned about the first-order necessary optimality conditions for above-mentioned optimal control problem which is the main challenging task of this article. To overcome the difficulties related to the differentiability of the control-to-state mapping, consequence of the lack of regularity of the state variable on bounded domains, we first establish some intermediate optimality conditions and then pass to the limit.
\end{abstract}

\maketitle
\noindent \textbf{Mathematics Subject Classification:}  49J20, 35Q35, 49K20, 76D55.  


\section{Introduction}
\setcounter{equation}{0}

This article discusses the velocity tracking optimal control problem associated with 3D  critical convective Brinkman–Forchheimer equations (CBFEs) defined on a  simply connected bounded domain $\mathfrak{D} \subset \mathbb{R}^3$ with $\mathrm{C}^2$-boundary $\partial\mathfrak{D}$. The critical CBFEs (also known as damped Navier-Stokes equations (NSEs)) describe the motion of incompressible viscous fluid through a rigid, homogeneous, isotropic, porous medium which are read as (\cite{Hajduk+Robinson}):
\begin{equation}\label{1}
	\left\{
	\begin{aligned}
		\frac{\partial \mfrak}{\partial t}-\mu \Delta\mfrak+(\mfrak\cdot\nabla)\mfrak+\alpha\mfrak+\beta|\mfrak|^{2}\mfrak+\nabla \pi&=\boldsymbol{f}, && \text{ in } \ \mathfrak{D}\times(0,\infty), \\ \nabla\cdot\mfrak&=0, && \text{ in } \ \mathfrak{D}\times[0,\infty), \\
		\mfrak&=\boldsymbol{0}, &&  \text{ on } \ \partial\mathfrak{D}\times[0,\infty), \\
		\mfrak(0)&=\mfrak_0, && \text{ in } \ \mathfrak{D},\\
		\int_{\mathfrak{D}}\pi(x,t)\d x&=0, &&  \text{ in } \ [0,\infty),
	\end{aligned}
	\right.
\end{equation}
 where $\mfrak$, $\pi$ and $\f$  denote the velocity field, the scalar pressure field,  and an external forcing, respectively. The last condition in \eqref{1} is there to make sure that the pressure $\pi$ is unique. Here, the constants $\mu, \alpha, \beta >0$ denote the Brinkman (effective viscosity),   Darcy (permeability of porous medium) and   Forchheimer (proportional to the porosity of the material) coefficients, respectively. In context of 3D NSEs, Leray (\cite{Leray_1934}) and Hopf (\cite{Hopf_1951}) were the first who established the existence of at least one weak solution, known as \emph{Leray-Hopf weak solutions}. But the uniqueness of Leray-Hopf weak solutions is still an open problem. Whereas, it has been proved in \cite{Gautam+Mohan_2025} that the system \eqref{1} admits a unique Leray-Hopf weak solution (in the sense of Definition \ref{def-weak-sol} below).    It has also been proved by Hajduk and Robinson \cite[Proposition 1.1]{Hajduk+Robinson} that system \eqref{1} and the NSEs have the same scaling when $\alpha=0$, but have no scaling invariance for other values of $\alpha$.


In the works \cite{MTM_AMOP_2021,MTM_EECT_2022,MTM_Optimization_2022}, the author considered two-dimensional CBFEs on bounded domains and investigated the existence of optimal controls with first-order necessary optimality conditions. However, authors in \cite{Gautam+Mohan_Arxiv_2025} considered 2D as well as 3D CBFEs on a torus and used dynamic programming to look at the infinite-dimensional Hamilton-Jacobi-Bellman equation  of first order that is connected to an optimal control problem for underlying system. It is worth noting that the work \cite{Gautam+Mohan_Arxiv_2025} requires the smooth solutions of state equation which is only known on a torus. In this work, we aim to investigate the optimal control problem associated with 3D critical CBFEs defined on bounded domains and we also establish first-order necessary optimality conditions. This control problem has numerous industrial and engineering applications (see \cite{Gunzburger_1995,Sritharan_1998}).

Let us now discuss the advantages gained in two-dimensional CBFEs and the challenges that arise when extending the analysis to three-dimensional CBFEs \eqref{1}. Let $\f$ and $\boldsymbol{g}$ be in $\Lrm^2(0,T;\L^2(\mathfrak{D}))$. For $0<\rho<1$, let $\f_{\rho} = \f + \rho \boldsymbol{g}$, $(\mfrak_{\f},\pi_{\f})$ and $(\mfrak_\rho,\pi_{\rho})$ be two weak solutions of \eqref{1} corresponding to $\f$ and $\f_{\rho}$, and define $\zfrak_\rho: = \frac{\mfrak_{\rho} - \mfrak_{\f}}{\rho}$. To establish the existence of an optimal control, there is no significant difference between the two-dimensional and three-dimensional cases; however, the main difficulty arises when deriving the first-order necessary optimality conditions (F-NOC). To deal with F-NOC, we usually start by finding Lipschitz estimates (see Proposition \ref{prop-stability-diff} below). These estimates help us to find uniform estimates for $\{\zfrak_\rho\}_{\rho>0}$ in a suitable functional setting and show that the limit $\zfrak$ is the solution of a corresponding linearized system. The G\^ateaux differentiability of the control-to-state mapping is intimately linked to whether or not this linearized system can be solved, as well as the uniqueness and regularity of its solutions. Then we introduce the adjoint system corresponding to this linearized system, and derive the F-NOC. In this approach, we normally need to make sure that both systems are well-posed and have unique solution. In addition, we also require that the solution of adjoint equation is an admissible test function for the weak formulation of the linearized system, and the solution of linearized equation is an admissible test function for the weak formulation of the adjoint system.

It is worth mentioning here that for system \eqref{1} in 2D (see \cite{MTM_AMOP_2021,MTM_EECT_2022,MTM_Optimization_2022}), the solution of linearized and adjoint systems belongs to $\mathrm{C}([0,T];\L^2(\mathfrak{D}))\cap \Lrm^2(0,T;\H^1_0(\mathfrak{D}))$ (for any given $T>0$) and its weak time derivative belongs to $\Lrm^2(0,T;\H^{-1}(\mathfrak{D}))+\Lrm^{\frac{4}{3}}(0,T;\L^{\frac43}(\mathfrak{D}))$. Due to Ladyzhenskaya's inequality in 2D (see \eqref{eqn-lady} for $d=2$ below), the space $\mathrm{C}([0,T];\L^2(\mathfrak{D}))\cap \Lrm^2(0,T;\H^1_0(\mathfrak{D}))$ is continuously  embedded in $\Lrm^{4}(0,T;\L^{4}(\mathfrak{D}))$, and therefore the solution of adjoint equation is an admissible test function for the weak formulation of the linearized system, and the solution of linearized equation is an admissible test function for the weak formulation of the adjoint system. However, in 3D, solutions do not belong to $\Lrm^{4}(0,T;\L^{4}(\mathfrak{D}))$. Due to insufficient regularity, the solutions of the linearized and adjoint equations cannot be employed as test functions in the weak formulations of one another. This tells us that no direct relation can be established between the solutions of linearized and adjoint equations, and under these circumstances, deriving the F-NOC becomes a highly nontrivial task.

In order to overcome the difficulty discussed above in 3D, we follow the approach explored in the work \cite{Arada_2014}.  We derive intermediate optimality conditions by utilizing the equation for $\zfrak_{\rho}$ (see \eqref{eqn-difference} below) and its corresponding intermediate adjoint equation (see \eqref{eqn-adjoint} below), rather than the usual linearized equation. In this approach, we require that $\zfrak_{\rho}$ possesses the same regularity as the state variable. To get the regularity of $\zfrak_{\rho}$ same as state variable, we implement the following hypothesis on the coefficients $\mu$ and $\beta$ appearing in system \eqref{1}:
\begin{hypothesis}\label{Para-Hypo}
	The parameters $\mu$ and $\beta$ satisfy:
	\begin{align*}
		2\beta \mu > \frac{1}{\kappa}, \;\; \text{ for some }\; 0<\kappa <1.
	\end{align*}
\end{hypothesis}
On the other hand, challenges concerning the existence, uniqueness, and regularity of a weak solution $\qfrak_\rho$ to the associated intermediate adjoint equation \eqref{eqn-adjoint} persist. To address this difficulty, we regularize the intermediate adjoint equation by introducing a suitable nonlinear term (see system \eqref{eqn-adjoint-regular} below). We then prove existence and uniqueness of the corresponding regularized intermediate adjoint variable $\qfrak^{\delta}$, establish a relation between $\zfrak_{\rho}$ and $\qfrak^{\delta}$, and finally pass to the limit as the regularization parameter $\delta\to 0$.

\subsection*{Main result}
Our main goal is to control the solution of the system \eqref{1} by a distributed force $\f$. The control variable $\f$ belongs to the set $\mathcal{F}_{ad}$ of admissible controls, which
is defined as a nonempty bounded closed convex subset of $\mathrm{L}^2(0,{T};{\L}^{2}(\mathfrak{D})))$.

Let $\mfrak$ be the solution to system \eqref{1} corresponding to control $\f\in \mathcal{F}_{ad}$. We consider the cost functional $ \mathcal{J}: \mathcal{F}_{ad}  \to \mathbb{R}^+$ given by 
\begin{align}\label{eqn-cost-functional}
	\mathcal{J}(\f,\mfrak  )=\frac12  \int_0^{T} \|\mfrak(t)-\mfrak_d(t)\|^2_2 \d t + \frac{\lambda}{2} \int_0^{T}\|\f(t)\|_2^2\d t,
\end{align}
where $\mfrak_d \in \mathrm{L}^2(0,{T};{\L}^2(\mathfrak{D}))$ corresponds to a desired target field and any $\lambda> 0$. The control problem reads
\begin{align}\label{eqn-control-problem}
	\min_{\f\in\mathcal{F}_{ad}}\bigg\{\mathcal{J}(\f,\mfrak) \; : \; \mfrak \text{ is the solution of \eqref{1} with force } \f \bigg\}.
\end{align}

Our main result demonstrates the existence of a solution to the control problem and formulates the first-order necessary optimality conditions.
\begin{theorem}\label{main-thm1}
	Assume that $\mfrak_0 \in \L^2(\mathfrak{D})$. Then the control problem \eqref{eqn-control-problem} admits, at least, one optimal solution 
	\begin{align}
		(\widetilde{\f}, \widetilde{\mfrak}) \in \mathcal{F}_{ad}\times  \mathrm{C}([0,{T}]; \L^2(\mathfrak{D}))\cap\mathrm{L}^2(0,{T}; \H^1_0(\mathfrak{D}))\cap\mathrm{L}^4(0,{T}; \L^4(\mathfrak{D})) ,
	\end{align}
	where $\widetilde{\mfrak}$ is the unique solution of \eqref{1} with $\f$ replaced by $\widetilde{\f}$.  In addition, let the Hypothesis \ref{Para-Hypo} be satisfied, then there exists a weak solution $\wi\qfrak $ (in the sense of Definition \ref{def-main-adjoint} below)  of the following system:
	\begin{equation}
		\left\{
		\begin{aligned}
			-\frac{\d\wi\qfrak}{\d t} + \mu \A \wi\qfrak  - \B(\wi\mfrak , \wi\qfrak) + \mathcal{P}[ \sum_{j=1}^3 [\nabla (\wi\mfrak)_j]\wi\qfrak_j ]  + \alpha\wi\qfrak       +  \beta  \mathcal{P}\{ |\wi\mfrak|^2 \wi\qfrak \}     +  2 \beta \mathcal{P}\{ [\wi\mfrak \cdot\wi\qfrak ] \wi\mfrak \}  & = \mathcal{P} (\wi \mfrak - \mfrak_d), \\
			\wi\qfrak(T)&=\boldsymbol{0}.
		\end{aligned}
		\right.
	\end{equation} 
	such that 
	\begin{align}
		\int_{0}^{T} (\v(t) - \wi\f(t)   ,  \wi\qfrak (t) + \lambda \wi\f(t))\d t \geq 0, \;\; \; \text{ for all }\;\; \v \in  \mathcal{F}_{ad}.
	\end{align}
\end{theorem}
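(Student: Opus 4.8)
The statement has two independent parts: existence of an optimal pair $(\widetilde\f,\widetilde\mfrak)$ by the direct method of the calculus of variations, and the first-order necessary optimality conditions, obtained through an intermediate, regularized adjoint equation as outlined in the introduction.

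\textbf{Existence of an optimal control.} Since $\mathcal{J}\ge 0$, the value $m:=\inf_{\f\in\mathcal{F}_{ad}}\mathcal{J}(\f,\mfrak)$ is finite; take a minimizing sequence $\{\f_n\}\subset\mathcal{F}_{ad}$ with corresponding (unique) weak solutions $\{\mfrak_n\}$ of \eqref{1} given by \cite{Gautam+Mohan_2025}. As $\mathcal{F}_{ad}$ is bounded, closed and convex in $\Lrm^2(0,T;\L^2(\mathfrak{D}))$, it is weakly sequentially compact, so up to a subsequence $\f_n\rightharpoonup\widetilde\f$ in $\Lrm^2(0,T;\L^2(\mathfrak{D}))$ with $\widetilde\f\in\mathcal{F}_{ad}$ by Mazur's lemma. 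The energy inequality for \eqref{1} bounds $\{\mfrak_n\}$, uniformly in $n$, in $\mathrm{C}([0,T];\L^2(\mathfrak{D}))\cap\Lrm^2(0,T;\H^1_0(\mathfrak{D}))\cap\Lrm^4(0,T;\L^4(\mathfrak{D}))$, together with $\{\partial_t\mfrak_n\}$ in the relevant dual space; an Aubin--Lions--Simon argument then gives $\mfrak_n\to\widetilde\mfrak$ strongly in $\Lrm^2(0,T;\L^2(\mathfrak{D}))$ and, along a further subsequence, the attendant weak and weak-$*$ limits. Passing to the limit in the weak formulation of \eqref{1} --- the only delicate terms $(\mfrak_n\cdot\nabla)\mfrak_n$ and $|\mfrak_n|^2\mfrak_n$ being handled by the strong $\Lrm^2$ convergence together with the uniform $\Lrm^4(0,T;\L^4(\mathfrak{D}))$ bound --- shows that $\widetilde\mfrak$ is the unique solution of \eqref{1} with force $\widetilde\f$. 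Finally, $\f\mapsto\int_0^T\|\f(t)\|_2^2\,\d t$ is convex and continuous hence weakly lower semicontinuous, while $\mfrak\mapsto\int_0^T\|\mfrak(t)-\mfrak_d(t)\|_2^2\,\d t$ is continuous for the strong $\Lrm^2(0,T;\L^2(\mathfrak{D}))$ topology, so $\mathcal{J}(\widetilde\f,\widetilde\mfrak)\le\liminf_n\mathcal{J}(\f_n,\mfrak_n)=m$, proving optimality.

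\textbf{The regularized adjoint and the duality identity.} Fix $\v\in\mathcal{F}_{ad}$ and, for $\rho\in(0,1)$, put $\f_\rho:=(1-\rho)\widetilde\f+\rho\v\in\mathcal{F}_{ad}$, with state $\mfrak_\rho$ and difference quotient $\zfrak_\rho:=\rho^{-1}(\mfrak_\rho-\widetilde\mfrak)$ solving \eqref{eqn-difference}. Under Hypothesis \ref{Para-Hypo}, Proposition \ref{prop-stability-diff} lifts the regularity of $\zfrak_\rho$ to that of the state, giving bounds uniform in $\rho$ in $\mathrm{C}([0,T];\L^2(\mathfrak{D}))\cap\Lrm^2(0,T;\H^1_0(\mathfrak{D}))\cap\Lrm^4(0,T;\L^4(\mathfrak{D}))$; this is precisely what makes $\zfrak_\rho$ an admissible test function. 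Since the intermediate adjoint equation \eqref{eqn-adjoint} is not known to be well posed, I work with its regularization \eqref{eqn-adjoint-regular}, prove existence and uniqueness of the solution $\qfrak^\delta$ by a Galerkin/monotonicity argument (the added nonlinear term supplying the missing coercivity), and --- crucially --- derive a priori bounds for $\{\qfrak^\delta\}$ uniform in $\delta$. Testing \eqref{eqn-difference} with $\qfrak^\delta$ and \eqref{eqn-adjoint-regular} with $\zfrak_\rho$ --- both legitimate by the regularity of $\zfrak_\rho$ and of $\qfrak^\delta$ established above --- and subtracting, the $\mu\A$, bilinear, $\alpha$ and cubic contributions cancel by the construction of the adjoint, while the time-derivative terms combine through integration by parts using $\zfrak_\rho(0)=\boldsymbol{0}$ and $\qfrak^\delta(T)=\boldsymbol{0}$. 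This produces the duality identity
\begin{align*}
	\int_0^T(\widetilde\mfrak(t)-\mfrak_d(t),\zfrak_\rho(t))\,\d t=\int_0^T(\v(t)-\widetilde\f(t),\qfrak^\delta(t))\,\d t+E(\rho,\delta),
\end{align*}
where $E(\rho,\delta)$ gathers the contribution of the $\delta$-regularizing term and the error coming from the coefficients of \eqref{eqn-difference} involving $\mfrak_\rho$ rather than $\widetilde\mfrak$ alone. Combining with optimality, $\mathcal{J}(\f_\rho,\mfrak_\rho)\ge\mathcal{J}(\widetilde\f,\widetilde\mfrak)$: dividing by $\rho$ and using $\mfrak_\rho-\widetilde\mfrak=\rho\zfrak_\rho$, $\f_\rho-\widetilde\f=\rho(\v-\widetilde\f)$,
\begin{align*}
	\int_0^T(\widetilde\mfrak(t)-\mfrak_d(t),\zfrak_\rho(t))\,\d t+\lambda\int_0^T(\widetilde\f(t),\v(t)-\widetilde\f(t))\,\d t+\frac{\rho}{2}\Big(\int_0^T\|\zfrak_\rho(t)\|_2^2\,\d t+\lambda\int_0^T\|\v(t)-\widetilde\f(t)\|_2^2\,\d t\Big)\ge 0 ,
\end{align*}
the last term being $O(\rho)$ by the uniform bound on $\zfrak_\rho$.

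\textbf{Passage to the limits.} Inserting the duality identity and letting $\rho\to0^{+}$ --- so that $\zfrak_\rho\rightharpoonup\zfrak$, the linearized variable (with strong $\Lrm^2(0,T;\L^2(\mathfrak{D}))$ convergence by Aubin--Lions), and the coefficient part of $E(\rho,\delta)$ vanishes --- yields the intermediate optimality condition $\int_0^T(\v-\widetilde\f,\qfrak^\delta+\lambda\widetilde\f)\,\d t\ge -r(\delta)$ with $r(\delta)\to0$ as $\delta\to0$. Then, using the $\delta$-uniform bounds, extract $\qfrak^\delta\rightharpoonup\widetilde\qfrak$ weakly (and weakly-$*$ in the relevant spaces, with strong $\Lrm^2(0,T;\L^2(\mathfrak{D}))$ convergence via Aubin--Lions applied to the time-reversed equation), pass to the limit $\delta\to0$ in the weak formulation of \eqref{eqn-adjoint-regular} --- the regularizing term disappearing by the uniform bounds --- to conclude that $\widetilde\qfrak$ is a weak solution of the adjoint system in the sense of Definition \ref{def-main-adjoint}, and simultaneously let $\delta\to0$ in the inequality to obtain $\int_0^T(\v(t)-\widetilde\f(t),\widetilde\qfrak(t)+\lambda\widetilde\f(t))\,\d t\ge0$. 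Since $\v\in\mathcal{F}_{ad}$ was arbitrary, this is the claimed first-order necessary optimality condition.

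\textbf{Main obstacle.} The technical heart is everything around the adjoint equation in 3D: because the linearized and adjoint variables lack the $\Lrm^4(0,T;\L^4(\mathfrak{D}))$ regularity needed to test one against the other directly, one is forced both to regularize the adjoint equation and to produce $\delta$-uniform estimates strong enough to (i) make the duality identity with $\zfrak_\rho$ meaningful with $E(\rho,\delta)$ controllable uniformly in $\rho$, and (ii) survive the limit $\delta\to0$ in the adjoint PDE and in the variational inequality at once. Obtaining these estimates --- and carrying out the limits in the correct order, $\rho\to0$ for fixed $\delta$ followed by $\delta\to0$ --- is where the difficulty concentrates; Hypothesis \ref{Para-Hypo} is exactly the ingredient that secures the $\zfrak_\rho$ side of the scheme by placing $\zfrak_\rho$ in the state regularity class.
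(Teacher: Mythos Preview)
Your overall architecture is the paper's: direct method for existence, then a regularized intermediate adjoint plus a duality identity for the optimality conditions. The existence part is fine. The gap is in the adjoint/limit argument, where you have the order of limits reversed and, as a consequence, an internal inconsistency.

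You explicitly work with the intermediate adjoint \eqref{eqn-adjoint} and its regularization \eqref{eqn-adjoint-regular}. By construction these carry coefficients $(\mfrak_1,\mfrak_2)=(\widetilde\mfrak,\widetilde\mfrak_\rho)$, so the solution is $\qfrak^\delta=\qfrak^{\delta,\rho}$ and depends on $\rho$. Two things follow. First, there is \emph{no} ``coefficient error'' in the duality: the whole point of the intermediate adjoint is that its coefficients match the difference equation \eqref{eqn-difference} exactly, so testing \eqref{eqn-difference} against $\qfrak^{\delta,\rho}$ and \eqref{eqn-adjoint-regular} against $\zfrak_\rho$ yields cancellation of every term except the single $\delta$-contribution $\delta\int_0^T\langle\mathcal{C}(\qfrak^{\delta,\rho}),\zfrak_\rho\rangle\,\d t$ (this is \eqref{eqn-opt-con-delta}). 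Second, when you then send $\rho\to0$ ``for fixed $\delta$'' you cannot treat $\qfrak^\delta$ as fixed, so the line ``$\int_0^T(\v-\widetilde\f,\qfrak^\delta+\lambda\widetilde\f)\,\d t\ge -r(\delta)$'' is unjustified as written.

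The paper proceeds in the opposite order. For fixed $\rho$ it sends $\delta\to0$: the $\delta$-term vanishes because $\delta\|\qfrak^{\delta,\rho}\|_{\Lrm^4(0,T;\widetilde\L^4)}^3\le C\delta^{1/4}$ while $\zfrak_\rho\in\Lrm^4(0,T;\widetilde\L^4)$ uniformly in $\rho$ (this is where Hypothesis~\ref{Para-Hypo} enters), and $\qfrak^{\delta,\rho}\rightharpoonup\widetilde\qfrak_\rho$, a weak solution of the intermediate adjoint. This already gives the clean intermediate optimality condition \eqref{eqn-IOC} with $\widetilde\qfrak_\rho$ and only an $O(\rho)$ remainder. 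Only afterwards is $\rho\to0$ taken, using $\widetilde\mfrak_\rho\to\widetilde\mfrak$ strongly and $\rho$-uniform bounds on $\widetilde\qfrak_\rho$ to pass to the final adjoint \eqref{eqn-adjoint-NOC} and the variational inequality. Your scheme can be repaired either by adopting this order, or by regularizing the \emph{final} adjoint (coefficients in $\widetilde\mfrak$ only) so that $\qfrak^\delta$ is genuinely independent of $\rho$ --- in which case a coefficient error of order $o(1)$ as $\rho\to0$ really does appear and your order of limits works --- but that is not what \eqref{eqn-adjoint-regular} is.
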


\subsection*{Organization of the paper}

In next section, we introduce the necessary functional framework, including the relevant function spaces and the definitions and properties of both linear and nonlinear operators that will be used to obtain an abstract formulation. In Section \ref{sec3}, we present the abstract formulation of the state equation, define the concept of a weak solution, and discuss the well-posedness results along with Lipschitz continuity estimates. Section \ref{sec4} is devoted to the intermediate adjoint system, where we prove the existence of a weak solution by regularizing the original system. Additionally, we derive an intermediate duality relation that plays a key role in the optimality analysis. Finally, in Section \ref{sec5}, we establish our main result concerning the existence of an optimal control-state pair for the control problem, and we derive the corresponding first-order necessary optimality conditions.

\section{Mathematical Formulation}\label{sec2}\setcounter{equation}{0}

We cover a few functional spaces, linear operators, and nonlinear operators in this section to facilitate future discussions. For a detailed discussion on the content provided in this section, we refer readers to \cite[Section 2]{MTM_SPDE_2022}.

\subsection{Functional setting}
Let us define  $\mathcal{V}:=\left\{\mfrak\in\C_0^{\infty}(\mathfrak{D};\R^3):\nabla\cdot\mfrak=0\right\},$  where  $\C_0^{\infty}(\mathfrak{D};\R^3)$ is the space  of all infinitely differentiable functions  with compact support in $\mathfrak{D}$.  We define the spaces  $\H$, $\V$ and $\wi\L^p$  as the completion of $\mathcal{V}$ in $\L^2(\mathfrak{D})=\mathrm{L}^2(\mathfrak{D};\R^3)$, $\H^1(\mathfrak{D})=\mathrm{H}^1(\mathfrak{D};\R^3)$ and $\L^p(\mathfrak{D})=\mathrm{L}^p(\mathfrak{D};\R^3)$ norms, respectively. The spaces $\H$, $\V$ and $\wi\L^p$ can be characterized as 
\begin{align*}
	\H & :=\left\{\mfrak\in\L^2(\mathfrak{D}):\mfrak\cdot\n\big|_{\partial\mathfrak{D}}=0, \;\; \nabla\cdot\mfrak=0 \right\},\\
	\V & :=\left\{\mfrak\in\H^1_0(\mathfrak{D}):\nabla\cdot\mfrak=0\right\}\\
	\wi\L^p & :=\left\{\mfrak\in\L^p(\mathfrak{D}):\nabla\cdot\mfrak=0,\mfrak\cdot\n\big|_{\partial\mathfrak{D}}=0\right\}, \;\;\; \text{ for  } \;\; p\in(2,\infty),
\end{align*}
with   the norms   $\|\mfrak\|_{2}^2:=\int_{\mathfrak{D}}|\mfrak(x)|^2\d x$, $\|\mfrak\|_{\V}^2:= \int_{\mathfrak{D}}|\mfrak(x)|^2\d x+\int_{\mathfrak{D}}|\nabla\mfrak(x)|^2\d x$ and $\|\mfrak\|_{p}^p := \int_{\mathfrak{D}}|\mfrak(x)|^p\d x,$ respectively, where $\n$ is the outward normal to the boundary $\partial\mathfrak{D}$  and $\mfrak\cdot\n\big|_{\partial\mathfrak{D}}$ should be understood in the sense of trace in $\H^{-1/2}(\partial\mathfrak{D})$. Using the Poincar\'e inequality (see \eqref{2.7a} below),   this norm is equivalent to the norm $ \|\mfrak\|_{\V}^2:=\int_{\mathfrak{D}}|\nabla\mfrak(x)|^2\d x.$

The inner product in the Hilbert space $\H$ is  denoted   by $(\cdot,\cdot)$  and the induced duality, for instance between the spaces $\V$  and its dual $\V'$, and $\wi\L^p$ and its dual $\wi\L^{\frac{p}{p-1}}$ is expressed by $\left<\cdot,\cdot\right>$.     $\V$ is densely and continuously embedded into $\H$ and $\H$ can be identified with its dual $\H'$ and we have the  \textit{Gelfand triple}: $\V\subset	\H\equiv\H'\subset\V'.$     In addition, the embedding of $\V\subset\H$ is compact.  

\subsection{Linear operator}
 It is well-known  that every vector field $\mfrak\in\mathbb{L}^p(\mathfrak{D})$, for $1<p<\infty$ can be uniquely represented as $\mfrak=\qfrak+\nabla \varphi,$ where $\qfrak\in\mathbb{L}^p(\mathfrak{D})$ with $\mathrm{div \ }\qfrak = 0$ in the sense of distributions in $\mathfrak{D}$, $\qfrak\cdot\n=0$ on $\partial\mathfrak{D}$ and $\varphi \in\mathrm{W}^{1,p}(\mathfrak{D})$ (Helmholtz-Weyl or Helmholtz-Hodge decomposition).  For smooth vector fields in $\mathfrak{D}$, such a decomposition is an orthogonal sum in $\mathbb{L}^2(\mathfrak{D})$.     $\mfrak=\qfrak+\nabla \varphi$ holds for all $\mfrak\in\mathbb{L}^p(\mathfrak{D})$, so that we can define the projection operator $\mathcal{P}_p$  by $\mathcal{P}_p\mfrak = \qfrak$. For simplicity, we use the notation $\mathcal{P}$ instead of $\mathcal{P}_2$ in the rest of the paper.  Since $\mathfrak{D}$ is of $\mathrm{C}^2$-boundary, we also infer that $\mathcal{P}$ maps $\H^1(\mathfrak{D})$ into itself and is continuous for the norm of $\H^1(\mathfrak{D})$. Let us define
\begin{equation*}
	\A\mfrak:=-\mathcal{P}\Delta\mfrak,\ \mfrak\in\D(\A):=\V\cap\H^{2}(\mathfrak{D}).
\end{equation*}
    the operator $\A$ is a non-negative, self-adjoint operator in $\H$  and \begin{align}\label{2.7a}
    	\left<\A\mfrak,\mfrak\right>=\|\mfrak\|_{\V}^2,\ \textrm{ for all }\ \mfrak\in\V, \ \text{ so that }\ \|\A\mfrak\|_{\V'}\leq \|\mfrak\|_{\V}.
    \end{align}


\begin{remark}\label{Rem-2.1}
	It is well known  that for a bounded domain $\mathfrak{D}$, the operator $\A$ is invertible and its inverse $\A^{-1}$ is bounded, self-adjoint and compact in $\H$. Hence the spectrum of $\A$ consists of an infinite sequence $0< \lambda_1\leq \lambda_2\leq\ldots\leq \lambda_k\leq \ldots$ 
    of eigenvalues with $\lambda_k\to\infty$ as $k\to\infty$, and there exists an orthogonal basis $\{e_k\}_{k=1}^{\infty} $ of $\H$ consisting of eigenfunctions of $\A$ such that $\A e_k =\lambda_ke_k$,  for all $ k\in\mathbb{N}$.  Taking $\mfrak=\sum\limits_{j=1}^{\infty}(\mfrak,e_j)e_j$, we have  $\A\mfrak=\sum\limits_{j=1}^{\infty}\lambda_j(\mfrak,e_j)e_j$. Thus, it is immediate that 
	\begin{align*}
		\|\nabla\mfrak\|_{2}^2=\langle \A\mfrak,\mfrak\rangle =\sum_{j=1}^{\infty}\lambda_j|(\mfrak,e_j)|^2\geq \lambda_1\sum_{j=1}^{\infty}|(\mfrak,e_j)|^2=\lambda_1\|\mfrak\|_{2}^2,
	\end{align*}
	for all $\mfrak\in\V$, which is known as the \emph{Poincar\'e inequality}.   
\end{remark}

\begin{remark}
From \cite[Lemmas 1 and 2, Chapter I]{OAL}, we have the following well-known Ladyzhenskaya's inequality, for $d\in\{2,3\}$:
	\begin{align}\label{eqn-lady}
		\|\mfrak\|_{{4}} \leq 2^{\frac{d-1}{4}} \|\mfrak\|^{1-\frac{d}{4}}_{{2} } \|\nabla \mfrak\|^{\frac{d}{4}}_{{2}},\ \mfrak\in\H_0^1(\mathfrak{D}).
	\end{align}
	From the inequality \eqref{eqn-lady}, it is clear that  $\V\subset \wi \L^4$.
\end{remark}

\subsection{Trilinear form and bilinear operator}
We define the \emph{trilinear form} $b(\cdot,\cdot,\cdot):\V\times\V\times\V\to\R$ by $$b(\pfrak,\qfrak,\rfrak)=\int_{\mathfrak{D}}(\pfrak(x)\cdot\nabla)\qfrak(x)\cdot\rfrak(x)\d x=\sum_{i,j=1}^3\int_{\mathfrak{D}}\pfrak_i(x)\frac{\partial \qfrak_j(x)}{\partial x_i}\rfrak_j(x)\d x.$$ If $\pfrak, \qfrak$ are such that the linear map $b(\pfrak, \qfrak, \cdot) $ is continuous on $\V$, the corresponding element of $\V'$ is denoted by $\B(\pfrak, \qfrak)$. We represent   $\B(\pfrak) = \B(\pfrak, \pfrak)=\mathcal{P}(\pfrak\cdot\nabla)\pfrak$. An application of integration by parts gives
\begin{equation}\label{b0}
	\left\{
	\begin{aligned}
		b(\pfrak,\qfrak,\qfrak) &= 0,\ \text{ for all }\ \pfrak,\qfrak \in\V,\\
		b(\pfrak,\qfrak,\rfrak) &=  -b(\pfrak,\rfrak,\qfrak),\ \text{ for all }\ \pfrak,\qfrak,\rfrak \in \V.
	\end{aligned}
	\right.\end{equation}
It can be easily seen that $\B$ maps $\wi\L^4$ (and so $\V$) into $\V'$ and
\begin{align*}
	\left|\left<\B(\pfrak,\pfrak),\qfrak\right>\right|=\left|b(\pfrak,\qfrak,\pfrak)\right|\leq \|\pfrak\|_{4}^2\|\nabla\qfrak\|_{2},
\end{align*}
for all $\qfrak\in\V$, so that 
\begin{align}\label{2.9a}
	\|\B(\pfrak)\|_{\V'} \leq \|\pfrak\|_{4}^2 \leq C \|\pfrak\|_{\V}^2,\ \text{ for all }\ \pfrak\in\V.
\end{align}

\subsection{Nonlinear operator}
Let us now consider the operator $\mathcal{C}(\pfrak):=\mathcal{P}(|\pfrak|^2\pfrak)$.  From \cite[Remark 1.6]{Temam_1984}, the operator $\mathcal{C}(\cdot):\V \to\V^{\prime}$ is well-defined. Also, it is immediate that $\langle\mathcal{C}(\pfrak),\pfrak\rangle =\|\pfrak\|_{4}^{4}$. 
In addition, for  $\pfrak,\qfrak\in\V$,  we have (cf. \cite{Gautam+Mohan_2025})
\begin{align}\label{2.23}
	&\langle\mathcal{P}(\pfrak|\pfrak|^{2})-\mathcal{P}(\qfrak|\qfrak|^{2}),\pfrak-\qfrak\rangle\geq \frac{1}{2}\||\pfrak| (\pfrak-\qfrak)\|_{2}^2 + \frac{1}{2}\||\qfrak|(\pfrak-\qfrak)\|_{2}^2 \geq \frac14 	\|\pfrak-\qfrak\|_{4}^{4} \geq 0.
\end{align}


\section{State equation}\label{sec3}\setcounter{equation}{0}

In this section, we formulate the state equation in an abstract setting, introduce the notion of a weak solution, and provide well-posedness results together with Lipschitz continuity estimates.

\subsection{Abstract formulation and weak  solution}  Let us take  the projection $\mathcal{P}$ onto the system \eqref{1} to write down the abstract formulation of the system \eqref{1} as: 
\begin{equation}\label{eqn-CBF-projected}
	\left\{
	\begin{aligned}
		\frac{\d\mfrak(t)}{\d t}+\mu\A\mfrak(t)+\B(\mfrak(t))+\alpha\mfrak(t)+\beta\mathcal{C}(\mfrak(t))&=\f(t), \ t\in (0,T),\\
		\mfrak(0)&=\mfrak_0.
	\end{aligned}
	\right.
\end{equation}
For convenience, we have used $\f$ instead of $\mathcal{P}\f$. 
Let us now provide the definition of \emph{weak solutions} of the system \eqref{eqn-CBF-projected}. 
\begin{definition}\label{def-weak-sol}
	A function  $\mfrak\in\mathrm{C}([0,T];\H)\cap\mathrm{L}^2(0,T;\V)\cap\mathrm{L}^{4}(0,T;\wi\L^{4})$ with $\partial_t\mfrak\in\mathrm{L}^{2}(0,T;\mathbb{V}')+\mathrm{L}^{\frac{4}{3}}(0,T;\wi\L^{\frac{4}{3}}),$  is called a \emph{weak solution} to the system \eqref{eqn-CBF-projected}, if for $\f\in\mathrm{L}^2(0,T;\H)$, $\mfrak_0\in\H$ and $\v\in\V$, $\mfrak(\cdot)$ satisfies:
	\begin{equation}\label{3.13}
		\left\{
		\begin{aligned}
			\langle\partial_t\mfrak(t)+\mu \A \mfrak(t)+{\B}(\mfrak(t))+\alpha\mfrak(t)+\beta\mathcal{C}(\mfrak(t)),\v\rangle&=(\f(t) , \v),\\
			 \mfrak(0) & = \mfrak_0,
		\end{aligned}
		\right.
	\end{equation}
	for a.e. $t\in[0,T]$,
	and the following equality holds: 
	\begin{align}\label{eqn-energy-equality}
		&\|\mfrak(t)\|_{2}^2+2\mu\int_0^t\|\mfrak(s)\|_{\V}^2\d s+2\alpha\int_0^t\|\mfrak(s)\|_{2}^2\d s+2\beta\int_0^t\|\mfrak(s)\|_{{4}}^{4}\d s   = \|\mfrak_0\|_{2}^2+2\int_0^t (\f(s),\mfrak(s)) \d s,
	\end{align}
	for all $t\in[0,T]$.
\end{definition}

\subsection{Well-posedness and Lipschitz estimates}
It is known from \cite{Gautam+Mohan_2025} that the system \eqref{eqn-CBF-projected} admits a unique weak solution in the sense of Definition \ref{def-weak-sol}. Therefore, we state the following result which has been proved in \cite{Gautam+Mohan_2025}.

\begin{theorem} [{\cite[Theorem 3.4]{Gautam+Mohan_2025}}] \label{weake}
	Suppose that $2\beta\mu\geq 1$. 
	For $\mfrak_0\in\H$ and $
	\f\in\mathrm{L}^2(0,T;\H)$, there exists a \emph{unique weak solution} $\mfrak(\cdot)$  to the system \eqref{eqn-CBF-projected} in the sense of Definition \ref{def-weak-sol} and the following energy estimates holds:
	\begin{align}\label{eqn-sol-estimate}
		& \sup_{0\leq s\leq t} \|\mfrak(s)\|_{2}^2+2\mu\int_0^t\|\mfrak(s)\|_{\V}^2\d s+2\alpha\int_0^t\|\mfrak(s)\|_{2}^2\d s+2\beta\int_0^t\|\mfrak(s)\|_{{4}}^{4}\d s 
        \nonumber \\ &  \leq \left[\|\mfrak_0\|_{2}^2 + \int_0^t \|\f(s)\|_2^2 \d s\right]e^t
		 =: K_{t,\f,\mfrak_0},
	\end{align}
for all $t\in[0,T]$.
\end{theorem}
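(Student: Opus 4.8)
The plan is to obtain the solution by a Faedo--Galerkin approximation, to derive uniform bounds from the energy identity, to pass to the limit by compactness, and finally to establish uniqueness by exploiting the balance between the Brinkman and Forchheimer terms encoded in $2\beta\mu\geq 1$. For the construction, let $\{e_k\}$ be the eigenbasis of $\A$ from Remark \ref{Rem-2.1}, set $\H_n=\mathrm{span}\{e_1,\dots,e_n\}$ and let $\mathrm{P}_n$ be the $\H$-orthogonal projection onto $\H_n$. Seek $\mfrak^n(t)=\sum_{k=1}^n c^n_k(t)e_k$ solving $\frac{\d}{\d t}(\mfrak^n,e_k)+\mu\langle\A\mfrak^n,e_k\rangle+\langle\B(\mfrak^n),e_k\rangle+\alpha(\mfrak^n,e_k)+\beta\langle\mathcal{C}(\mfrak^n),e_k\rangle=(\f,e_k)$ for $k=1,\dots,n$, with $\mfrak^n(0)=\mathrm{P}_n\mfrak_0$. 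Since the nonlinear terms are polynomial in $(c^n_1,\dots,c^n_n)$, the Cauchy--Lipschitz theorem yields a unique local solution. Testing with $\mfrak^n$, using $b(\mfrak^n,\mfrak^n,\mfrak^n)=0$ from \eqref{b0}, $\langle\mathcal{C}(\mfrak^n),\mfrak^n\rangle=\|\mfrak^n\|_4^4$, $\langle\A\mfrak^n,\mfrak^n\rangle=\|\mfrak^n\|_\V^2$ from \eqref{2.7a}, and Young's inequality on $(\f,\mfrak^n)$, gives
\begin{align*}
	\frac{\d}{\d t}\|\mfrak^n(t)\|_2^2+2\mu\|\mfrak^n\|_\V^2+2\alpha\|\mfrak^n\|_2^2+2\beta\|\mfrak^n\|_4^4\leq\|\mfrak^n\|_2^2+\|\f\|_2^2 .
\end{align*}
Grönwall's inequality in integrating-factor form then produces exactly the estimate \eqref{eqn-sol-estimate} for $\mfrak^n$, uniformly in $n$; in particular the Galerkin solutions are global and bounded in $\mathrm{C}([0,T];\H)\cap\mathrm{L}^2(0,T;\V)\cap\mathrm{L}^4(0,T;\wi\L^4)$.

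Next, I estimate the time derivative. By \eqref{2.9a}, $\|\B(\mfrak^n)\|_{\V'}\leq\|\mfrak^n\|_4^2$, so $\B(\mfrak^n)$ is bounded in $\mathrm{L}^2(0,T;\V')$; since $\|\mathcal{C}(\mfrak^n)\|_{\wi\L^{4/3}}\leq C\|\mfrak^n\|_4^3$ (boundedness of $\mathcal{P}$ on $\L^{4/3}$), $\mathcal{C}(\mfrak^n)$ is bounded in $\mathrm{L}^{4/3}(0,T;\wi\L^{4/3})$; and $\A\mfrak^n$ is bounded in $\mathrm{L}^2(0,T;\V')$ by \eqref{2.7a}, while $\alpha\mfrak^n$ and $\f$ lie in $\mathrm{L}^2(0,T;\H)$. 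Hence $\partial_t\mfrak^n$ is bounded in $\mathrm{L}^2(0,T;\V')+\mathrm{L}^{4/3}(0,T;\wi\L^{4/3})$. Banach--Alaoglu then gives a subsequence with $\mfrak^n\rightharpoonup\mfrak$ weakly in $\mathrm{L}^2(0,T;\V)$, weakly-$*$ in $\mathrm{L}^\infty(0,T;\H)$, weakly in $\mathrm{L}^4(0,T;\wi\L^4)$, and $\partial_t\mfrak^n\rightharpoonup\partial_t\mfrak$ in the sum space; since $\V\hookrightarrow\H$ is compact, the Aubin--Lions--Simon lemma gives $\mfrak^n\to\mfrak$ strongly in $\mathrm{L}^2(0,T;\H)$, hence (up to a further subsequence) a.e.\ in $\mathfrak{D}\times(0,T)$. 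The a.e.\ convergence together with the uniform $\mathrm{L}^4(0,T;\wi\L^4)$-bound and Vitali's theorem identifies the weak limits of the nonlinear terms: $\B(\mfrak^n)\rightharpoonup\B(\mfrak)$ in $\mathrm{L}^2(0,T;\V')$ (conveniently by writing $b(\mfrak^n,\v,\mfrak^n)$ against smooth test functions and using strong $\mathrm{L}^2(0,T;\H)$-convergence) and $\mathcal{C}(\mfrak^n)\rightharpoonup\mathcal{C}(\mfrak)$ in $\mathrm{L}^{4/3}(0,T;\wi\L^{4/3})$. Passing to the limit in the Galerkin identity (first against $e_k$ times a time test function, then by density against all $\v\in\V$) shows $\mfrak$ satisfies \eqref{3.13}. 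From $\mfrak\in\mathrm{L}^2(0,T;\V)\cap\mathrm{L}^4(0,T;\wi\L^4)$ and $\partial_t\mfrak\in\mathrm{L}^2(0,T;\V')+\mathrm{L}^{4/3}(0,T;\wi\L^{4/3})$, a standard Lions--Magenes type lemma for the duality between this intersection space and its dual gives $\mfrak\in\mathrm{C}([0,T];\H)$, $\mfrak(0)=\mfrak_0$, and absolute continuity of $t\mapsto\|\mfrak(t)\|_2^2$ with $\frac{\d}{\d t}\|\mfrak\|_2^2=2\langle\partial_t\mfrak,\mfrak\rangle$; testing \eqref{3.13} with $\mfrak$ itself then yields the energy equality \eqref{eqn-energy-equality}, and weak lower semicontinuity applied to \eqref{eqn-sol-estimate} at the Galerkin level transfers the bound to $\mfrak$.

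For uniqueness, let $\mfrak_1,\mfrak_2$ be two weak solutions with the same data and $\w=\mfrak_1-\mfrak_2$. Subtracting the equations, testing with $\w$ (legitimate by the lemma above applied to $\w$), and using $\langle\B(\mfrak_1)-\B(\mfrak_2),\w\rangle=b(\w,\mfrak_1,\w)=-b(\w,\w,\mfrak_1)$ from \eqref{b0} together with the monotonicity bound \eqref{2.23} gives
\begin{align*}
	\frac12\frac{\d}{\d t}\|\w\|_2^2+\mu\|\w\|_\V^2+\alpha\|\w\|_2^2+\frac{\beta}{2}\||\mfrak_1|\w\|_2^2+\frac{\beta}{2}\||\mfrak_2|\w\|_2^2\leq|b(\w,\w,\mfrak_1)|\leq\mu\|\nabla\w\|_2^2+\frac{1}{4\mu}\||\mfrak_1|\w\|_2^2 ,
\end{align*}
where the last step uses $|b(\w,\w,\mfrak_1)|\leq\|\nabla\w\|_2\,\||\mfrak_1|\w\|_2$ and Young's inequality. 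Since $2\beta\mu\geq1$ means $\frac{1}{4\mu}\leq\frac{\beta}{2}$, the right-hand side is absorbed into the left, leaving $\frac12\frac{\d}{\d t}\|\w\|_2^2+\mu\|\w\|_2^2+\alpha\|\w\|_2^2+(\frac{\beta}{2}-\frac{1}{4\mu})\||\mfrak_1|\w\|_2^2+\frac{\beta}{2}\||\mfrak_2|\w\|_2^2\leq0$; as $\w(0)=\boldsymbol{0}$, integration gives $\w\equiv\boldsymbol{0}$.

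The main obstacle is precisely this uniqueness step at the critical exponent: unlike the supercritical damping regime (where $|\mfrak|^{r-1}\mfrak$ with $r>3$ dominates the convective term outright), the term $\B$ cannot be controlled by the Brinkman dissipation alone (this would amount to uniqueness of Leray--Hopf solutions for 3D Navier--Stokes, which is open); one must split $|b(\w,\w,\mfrak_1)|\leq\|\nabla\w\|_2\,\||\mfrak_1|\w\|_2$ and borrow simultaneously from $\mu\|\nabla\w\|_2^2$ and from the quadratic gain $\tfrac{\beta}{2}\||\mfrak_1|\w\|_2^2$ furnished by \eqref{2.23}, which is exactly why the balance condition $2\beta\mu\geq1$ is sharp for this argument. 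A secondary technical point is ensuring the time-derivative estimate lands in the sum space $\mathrm{L}^2(0,T;\V')+\mathrm{L}^{4/3}(0,T;\wi\L^{4/3})$ and that the corresponding Lions--Magenes lemma applies so that $\mfrak$ is an admissible test function and the energy equality (rather than merely the inequality) holds.
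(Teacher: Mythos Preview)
The paper does not give its own proof of this theorem: it is quoted verbatim from \cite{Gautam+Mohan_2025} and only the statement is reproduced. Your argument is the standard one (Faedo--Galerkin approximation, uniform energy bounds, Aubin--Lions compactness, identification of the nonlinear limits, and uniqueness via the splitting $|b(\w,\w,\mfrak_1)|\leq\|\nabla\w\|_2\||\mfrak_1|\w\|_2$ balanced against the monotonicity gain \eqref{2.23}) and is correct; it is exactly the route the cited reference takes, and the paper itself reuses the same mechanism later (cf.\ \eqref{38} in the proof of Proposition~\ref{prop-stability-diff}). One cosmetic slip: in your final uniqueness display, after absorbing $\mu\|\nabla\w\|_2^2$ there is no surviving ``$\mu\|\w\|_2^2$'' term on the left---the $\V$-seminorm is exactly consumed---but this does not affect the conclusion $\frac{\d}{\d t}\|\w\|_2^2\leq 0$.
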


Next we obtain  Lipschitz estimates for the system \eqref{eqn-CBF-projected}.

\begin{proposition}\label{prop-stability-diff}
	Suppose that Hypothesis \ref{Para-Hypo} is satisfied. Let   $\mfrak_1$ and $\mfrak_2$ be two solutions to the system \eqref{3.13} sharing the initial data $\mfrak_0$,  and corresponding to external forcing $\f_1$ and $\f_2$, respectively. Then, the difference $\vfrak := \mfrak_1 - \mfrak_2$ satisfies the following   estimates:
	\begin{align}\label{eqn-diff-sol-estimate}
		&\sup_{0\leq t \leq T} \|\vfrak(t)\|_2^2 + 2 \mu(1-\kappa) \int_{0}^{T} \|\vfrak(t)\|^2_{\V} \d t + 2 \alpha \int_{0}^{T} \|\vfrak(t)\|_2^2 \d t     +  \frac{1}{2}\left(\beta - \frac{1}{2\mu\kappa}\right) \int_{0}^{T} \|\vfrak(t) \|_{4}^4 \d t
		\nonumber\\&
		\leq \sup_{0\leq t \leq T} \|\vfrak(t)\|_2^2 + 2 \mu(1-\kappa) \int_{0}^{T} \|\vfrak(t)\|^2_{\V} \d t + 2 \alpha \int_{0}^{T} \|\vfrak(t)\|_2^2 \d t     \nonumber\\ & \quad +  \left(\beta - \frac{1}{2\mu\kappa}\right) \int_{0}^{T} \left[\||\mfrak_1(t)| \vfrak(t) \|_{2}^2 + \||\mfrak_2(t)| \vfrak(t) \|_{2}^2\right] \d t
	\nonumber\\&
	\leq  e^T \int_{0}^{T}  \|\f_1(t) - \f_2(t)\|^2_2 \d t,
	\end{align}
where $\kappa$ is the constant appearing in Hypothesis \ref{Para-Hypo}.
\end{proposition}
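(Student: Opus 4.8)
The plan is to establish the energy balance satisfied by the difference $\vfrak=\mfrak_1-\mfrak_2$, test it against $\vfrak$ itself, absorb the convective and Forchheimer nonlinearities into the dissipative terms using the monotonicity-type estimate \eqref{2.23}, with the quantitative gain supplied by Hypothesis \ref{Para-Hypo}, and finally close the argument with Gr\"onwall's inequality. The second (main) inequality will come out of this, and the first one will follow at once from \eqref{2.23}.

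\textbf{Step 1 (equation for $\vfrak$ and its energy identity).} Subtracting the weak formulations \eqref{3.13} written for $\mfrak_1$ and $\mfrak_2$, and using that they share the initial datum, one has $\vfrak(0)=\boldsymbol0$ and
\[
\partial_t\vfrak+\mu\A\vfrak+\big(\B(\mfrak_1)-\B(\mfrak_2)\big)+\alpha\vfrak+\beta\big(\mathcal{C}(\mfrak_1)-\mathcal{C}(\mfrak_2)\big)=\f_1-\f_2
\]
in $\mathrm{L}^2(0,T;\V')+\mathrm{L}^{4/3}(0,T;\wi\L^{4/3})$. Since $\mfrak_1,\mfrak_2$ belong to the weak-solution class, $\vfrak\in\mathrm{C}([0,T];\H)\cap\mathrm{L}^2(0,T;\V)\cap\mathrm{L}^4(0,T;\wi\L^4)$ with $\partial_t\vfrak$ in the above dual space; by the Lions--Magenes/Temam-type lemma adapted to this setting, $t\mapsto\|\vfrak(t)\|_2^2$ is absolutely continuous with $\frac{\d}{\d t}\|\vfrak(t)\|_2^2=2\langle\partial_t\vfrak(t),\vfrak(t)\rangle$. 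Pairing the equation with $\vfrak$ therefore gives
\[
\frac12\frac{\d}{\d t}\|\vfrak\|_2^2+\mu\|\vfrak\|_{\V}^2+\alpha\|\vfrak\|_2^2+\beta\langle\mathcal{C}(\mfrak_1)-\mathcal{C}(\mfrak_2),\vfrak\rangle=(\f_1-\f_2,\vfrak)-\langle\B(\mfrak_1)-\B(\mfrak_2),\vfrak\rangle.
\]

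\textbf{Step 2 (nonlinear terms) and Step 3 (absorption, where Hypothesis \ref{Para-Hypo} enters).} For the convective term, decompose $\B(\mfrak_1)-\B(\mfrak_2)=\B(\mfrak_1,\vfrak)+\B(\vfrak,\mfrak_2)$ and use \eqref{b0} (namely $b(\mfrak_1,\vfrak,\vfrak)=0$ and $b(\vfrak,\mfrak_2,\vfrak)=-b(\vfrak,\vfrak,\mfrak_2)$), so that $|\langle\B(\mfrak_1)-\B(\mfrak_2),\vfrak\rangle|=|b(\vfrak,\vfrak,\mfrak_2)|\le\|\vfrak\|_{\V}\,\||\mfrak_2|\vfrak\|_2\le\mu\kappa\|\vfrak\|_{\V}^2+\tfrac{1}{4\mu\kappa}\||\mfrak_2|\vfrak\|_2^2$. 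For the Forchheimer term, inequality \eqref{2.23} gives $\beta\langle\mathcal{C}(\mfrak_1)-\mathcal{C}(\mfrak_2),\vfrak\rangle\ge\tfrac{\beta}{2}\||\mfrak_1|\vfrak\|_2^2+\tfrac{\beta}{2}\||\mfrak_2|\vfrak\|_2^2$. Inserting these, the term $\mu\kappa\|\vfrak\|_{\V}^2$ is absorbed into $\mu\|\vfrak\|_{\V}^2$, leaving $\mu(1-\kappa)$, and $\tfrac{1}{4\mu\kappa}\||\mfrak_2|\vfrak\|_2^2$ is absorbed into $\tfrac{\beta}{2}\||\mfrak_2|\vfrak\|_2^2$, leaving the coefficient $\tfrac{\beta}{2}-\tfrac{1}{4\mu\kappa}=\tfrac12\big(\beta-\tfrac{1}{2\mu\kappa}\big)$, which is strictly positive precisely because $2\beta\mu>\tfrac1\kappa$. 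Multiplying by $2$ and using $\beta\ge\beta-\tfrac{1}{2\mu\kappa}$ on the surviving $\||\mfrak_1|\vfrak\|_2^2$ term, one obtains
\[
\frac{\d}{\d t}\|\vfrak\|_2^2+2\mu(1-\kappa)\|\vfrak\|_{\V}^2+2\alpha\|\vfrak\|_2^2+\Big(\beta-\frac{1}{2\mu\kappa}\Big)\big(\||\mfrak_1|\vfrak\|_2^2+\||\mfrak_2|\vfrak\|_2^2\big)\le 2(\f_1-\f_2,\vfrak)\le\|\vfrak\|_2^2+\|\f_1-\f_2\|_2^2.
\]
Integrating over $[0,t]$ with $\vfrak(0)=\boldsymbol0$ and applying Gr\"onwall's inequality (the $e^T$ coming from the $\|\vfrak\|_2^2$ on the right) yields the second inequality of the statement; the first one then follows from the last bound in \eqref{2.23}, $\||\mfrak_1|\vfrak\|_2^2+\||\mfrak_2|\vfrak\|_2^2\ge\tfrac12\|\vfrak\|_4^4$, together with $\beta-\tfrac{1}{2\mu\kappa}\ge0$.

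\textbf{Main obstacle.} The delicate point is the tight absorption in Step 3: in the critical Forchheimer regime the only dissipation available to control the convective term $b(\vfrak,\vfrak,\mfrak_2)$, beyond the viscous contribution, is the weighted quantity $\||\mfrak_i|\vfrak\|_2^2$ extracted from \eqref{2.23}, and balancing the constants forces exactly the condition $2\beta\mu>1/\kappa$ of Hypothesis \ref{Para-Hypo}, which is strictly stronger than the $2\beta\mu\ge1$ sufficient for well-posedness in Theorem \ref{weake}. A secondary technical issue is the rigorous justification of the energy identity for $\vfrak$ (a density argument / Lions--Magenes-type lemma in the $\mathrm{L}^2(0,T;\V')+\mathrm{L}^{4/3}(0,T;\wi\L^{4/3})$ functional framework), since $\vfrak$ is not itself a solution of the state equation and one must check that each of $\B(\mfrak_i)$ and $\mathcal{C}(\mfrak_i)$, paired with $\vfrak$, is a legitimate duality in the weak-solution class.
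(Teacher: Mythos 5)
Your proof is correct and follows essentially the same route as the paper's: form the difference equation, test against $\vfrak$, bound the convective term by $\|\vfrak\|_{\V}\||\mfrak_2|\vfrak\|_2$ and Young's inequality with weight $\mu\kappa$, absorb the resulting $\frac{1}{4\mu\kappa}\||\mfrak_2|\vfrak\|_2^2$ into the weighted Forchheimer dissipation, and close with Gr\"onwall. The only cosmetic difference is that you invoke the pre-packaged monotonicity estimate \eqref{2.23} for $\langle\mathcal{C}(\mfrak_1)-\mathcal{C}(\mfrak_2),\vfrak\rangle$, whereas the paper writes the difference equation \eqref{eqn-difference} with the Forchheimer term already algebraically decomposed as $\tfrac{\beta}{2}[|\mfrak_1|^2+|\mfrak_2|^2]\vfrak+\tfrac{\beta}{2}[(\mfrak_1+\mfrak_2)\cdot\vfrak](\mfrak_1+\mfrak_2)$ and obtains the same lower bound $\tfrac{\beta}{2}(\||\mfrak_1|\vfrak\|_2^2+\||\mfrak_2|\vfrak\|_2^2)$ directly from the energy identity; these are two faces of the same computation, since \eqref{2.23} is precisely that identity with the nonnegative $\|(\mfrak_1+\mfrak_2)\cdot\vfrak\|_2^2$ term dropped.
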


\begin{proof}
	Note that the difference $\vfrak := \mfrak_1 - \mfrak_2$ satisfies the following system:
	\begin{equation}\label{eqn-difference}
		\left\{
		\begin{aligned}
			\frac{\d\vfrak}{\d t} +\mu\A\vfrak+\B(\mfrak_1, \vfrak) + \B(\vfrak, \mfrak_2) + \alpha\vfrak   + \frac{\beta}{2} \mathcal{P} \{ [|\mfrak_1|^2 + |\mfrak_2|^2] \vfrak \}  & \\ + \frac{\beta}{2}\mathcal{P}\{ [(\mfrak_1+\mfrak_2)\cdot\vfrak](\mfrak_1+\mfrak_2)\}  
			 &=\f_1 - \f_2, \\
			\vfrak(0)&=\boldsymbol{0}.
		\end{aligned}
		\right.
	\end{equation}
	Taking the inner product of equation $\eqref{eqn-difference}_1$ with $\vfrak$, we obtain
	\begin{align}\label{37}
	& 	\frac{1}{2} \frac{\d }{\d t}\|\vfrak(t)\|_2^2 + \mu\|\vfrak(t)\|^2_{\V} + \alpha \|\vfrak(t)\|_2^2   + \frac{\beta}{2}\left[ \||\mfrak_1(t)|\vfrak(t)\|^2_2 + \||\mfrak_2(t)|\vfrak(t)\|^2_2\right] 
	\nonumber \\ & + \frac{\beta}{2}  \| (\mfrak_1(t)  + \mfrak_2(t))\cdot \vfrak(t)\|_2^2 
	\nonumber\\ & = - b(\vfrak(t), \mfrak_2(t), \vfrak(t))   + (\f_1(t)-\f_2(t), \vfrak(t)),
	\end{align}
for a.e. $t\in[0,T]$.  Using H\"older's and Young's inequalities, we estimate $|b(\w,\u,\w)|$ as  
\begin{align}
	|b(\vfrak, \mfrak_2 , \vfrak)| & \leq 
		\||\mfrak_2|\vfrak\|_{2}\|\vfrak\|_{\V}  \leq \mu\kappa \|\vfrak\|_{\V}^2 + \frac{1}{4\mu\kappa} \| |\mfrak_2 |\vfrak\|^2_{2}, \label{38} \\
|(\f_1 - \f_2, \vfrak)| & \leq \|\f_1 - \f_2\|_2 \|\vfrak\|_2  \leq \frac12 \|\f_1 - \f_2\|^2_2 +  \frac12 \|\vfrak\|^2_2,\label{39}
\end{align}
where $\kappa$ is the same as in Hypothesis \ref{Para-Hypo}.
Combining \eqref{37}-\eqref{39}, we find
\begin{align}
	& \frac{\d }{\d t} \|\vfrak(t)\|_2^2 + 2 \mu(1-\kappa)\|\vfrak(t)\|^2_{\V} + 2 \alpha \|\vfrak(t)\|_2^2     +  \left(\beta - \frac{1}{2\mu\kappa}\right)\left[\||\mfrak_1| \vfrak \|_{2}^2 + \||\mfrak_2| \vfrak \|_{2}^2\right]
	\nonumber\\ &  \leq   \|\vfrak(t)\|^2_2 +   \|\f_1 (t)- \f_2(t)\|^2_2,
\end{align}
for a.e. $t\in[0,T]$.  An application of Gronwall's inequality completes the proof.
\end{proof}


\section{Intermediate adjoint system}\label{sec4}\setcounter{equation}{0}
To derive the first-order necessary optimality conditions, we are required to investigate the following intermediate adjoint system:
\begin{equation}\label{eqn-adjoint}
	\left\{
	\begin{aligned}
		-\frac{\d\qfrak}{\d t} + \mu \A \qfrak - \B(\mfrak_1, \qfrak) + \mathcal{P}[ \sum_{j=1}^3 [\nabla(\mfrak_2)_j]\qfrak_j ]  + \alpha\qfrak  & + \frac{\beta}{2} \mathcal{P}\{ [|\mfrak_1|^2 + |\mfrak_2|^2] \qfrak \}  \\ + \frac{\beta}{2}\mathcal{P}\{ [(\mfrak_1+\mfrak_2)\cdot\qfrak](\mfrak_1+\mfrak_2)\}  & = \mathcal{P}\h, \\
		\qfrak(T)&=\boldsymbol{0},
	\end{aligned}
	\right.
\end{equation}
where   $\mfrak_1$ and $\mfrak_2$ are two solutions to the system \eqref{3.13} sharing the initial data $\mfrak_0$,  and corresponding to external forcing $\f_1$ and $\f_2$, respectively.  Let us give the definition of weak solution of system \eqref{eqn-adjoint}.

\begin{definition}\label{def-adjoint}
	A function  $\qfrak\in\mathrm{L}^{\infty}(0,T;\H)\cap\mathrm{L}^2(0,T;\V)$ with  $\partial_t\qfrak\in\mathrm{L}^{2}(0,T;\mathbb{V}')+\mathrm{L}^{\frac{4}{3}}(0,T;\wi\L^{\frac{4}{3}})$, 
	\begin{align*}
	\int_{0}^{T}	\int_{\mathfrak{D}}  [|\mfrak_1(x,t)|^{2}+|\mfrak_2(x,t)|^{2}]|\qfrak(x, t)|^2   \d x \d t 
< + \infty,
	\end{align*}
and $\qfrak(T)=\boldsymbol{0}$,	 is called a \emph{weak solution} to the system \eqref{eqn-adjoint}, if for $\h \in\mathrm{L}^2(0,T;\L^2(\mathfrak{D}))$ and $\v\in\V$, $\qfrak(\cdot)$ satisfies:
	\begin{equation}\label{eqn-adjoint-WF}	 
		\begin{aligned}
			\left\langle-\frac{\d\qfrak}{\d t} + \mu \A \qfrak - \B(\mfrak_1, \qfrak) + \mathcal{P}\left\{ \sum_{j=1}^3 [\nabla(\mfrak_2)_j]\qfrak_j \right\}  + \alpha\qfrak \right.  & \left. + \frac{\beta}{2} \mathcal{P}\left\{ [|\mfrak_1|^2 + |\mfrak_2|^2] \qfrak \right\} \right. \\  \left. + \frac{\beta}{2}\mathcal{P}\left\{ [(\mfrak_1+\mfrak_2)\cdot\qfrak](\mfrak_1+\mfrak_2)\right\} ,\v\right\rangle&=(\h,\v),
		\end{aligned}
 	\end{equation}
for a.e. $t\in[0,T]$. 
	
\end{definition}

\subsection{Regularization of the system \eqref{eqn-adjoint}}
In order to find the existence of a weak solution of system \eqref{eqn-adjoint} in the sense of Definition \ref{def-adjoint}, we  first prove the existence of a unique weak solution of the following regularized version of system \eqref{eqn-adjoint}:
\begin{equation}\label{eqn-adjoint-regular}
	\left\{
	\begin{aligned}
		-\frac{\d\qfrak^{\delta}}{\d t} + \mu \A \qfrak^{\delta} + \delta \mathcal{C}(\qfrak^{\delta}) - \B(\mfrak_1, \qfrak^{\delta}) + \mathcal{P}\left\{ \sum_{j=1}^3 [\nabla(\mfrak_2)_j](\qfrak^{\delta})_j \right\}  + \alpha\qfrak^{\delta}    &   + \frac{\beta}{2} \mathcal{P}\left\{ [|\mfrak_1|^2 + |\mfrak_2|^2] \qfrak^{\delta} \right\}  \\    + \frac{\beta}{2}\mathcal{P}\{ [(\mfrak_1+\mfrak_2)\cdot\qfrak^{\delta}](\mfrak_1+\mfrak_2)\}  & = \mathcal{P} \h, \\
		\qfrak^{\delta}(T)&=\boldsymbol{0}.
	\end{aligned}
	\right.
\end{equation} 
Next we provide the definition of weak solution to   system \eqref{eqn-adjoint-regular}.
\begin{definition}\label{def-adjoint-reg}
	A function  $\qfrak^{\delta}\in\mathrm{L}^{\infty}(0,T;\H)\cap\mathrm{L}^2(0,T;\V)$ with  $\partial_t\qfrak^{\delta}\in\mathrm{L}^{2}(0,T;\mathbb{V}')+\mathrm{L}^{\frac{4}{3}}(0,T;\wi\L^{\frac{4}{3}})$, 
	\begin{align*}
		\int_{0}^{T}	\int_{\mathfrak{D}}  [|\mfrak_1(x,t)|^{2}+|\mfrak_2(x,t)|^{2}]|\qfrak^{\delta}(x, t)|^2   \d x \d t 
		< + \infty,
	\end{align*}
	and $\qfrak^{\delta}(T)=\boldsymbol{0}$,	 is called a \emph{weak solution} to the system \eqref{eqn-adjoint}, if for $\h \in\mathrm{L}^2(0,T;\L^2(\mathfrak{D}))$ and $\v\in\V$, $\qfrak^{\delta}(\cdot)$ satisfies:
	\begin{equation}\label{eqn-adjoint-reg-WF}	 
		\begin{aligned}
			\left\langle-\frac{\d\qfrak^{\delta}}{\d t} + \mu \A \qfrak^{\delta}  + \delta \mathcal{C}(\qfrak^{\delta}) - \B(\mfrak_1, \qfrak^{\delta}) + \mathcal{P}\left\{ \sum_{j=1}^3 [\nabla(\mfrak_2)_j](\qfrak^{\delta})_j \right\}  + \alpha\qfrak^{\delta} \right.  & \left. + \frac{\beta}{2} \mathcal{P}\left\{ [|\mfrak_1|^2 + |\mfrak_2|^2] \qfrak^{\delta} \right\} \right. \\  \left. + \frac{\beta}{2}\mathcal{P}\left\{ [(\mfrak_1+\mfrak_2)\cdot\qfrak^{\delta}](\mfrak_1+\mfrak_2)\right\} ,\v\right\rangle&=(\h,\v),
		\end{aligned}
	\end{equation}
	for a.e. $t\in[0,T]$. 
\end{definition}

\begin{theorem}\label{thm-sol-delta}
For $\h \in\mathrm{L}^2(0,T;\L^2(\mathfrak{D}))$,	there exists a unique weak solution $\qfrak^\delta$ to system \eqref{eqn-adjoint-regular} in the sense of Definition \ref{def-adjoint-reg}. In addition, we have
\begin{align}\label{eqn-opt-con-delta}
	&   \int_{0}^{T} (\f_1(t) - \f_2(t) , \qfrak^\delta(t)) \d t + \delta  \int_{0}^{T}   \langle \mathcal{C}(\qfrak^\delta(t)) , \vfrak(t) \rangle \d t
	  =   \int_{0}^{T}  (\h(t) , \mfrak_1 (t) - \mfrak_2 (t)) \d t  .
\end{align}
and the following estimates hold:
\begin{align}\label{eqn-energy-estimates-delta}
	&   \sup_{t\in[0,T]}\|\qfrak^\delta(t)\|_2^2 + 2\mu(1-\kappa)\int_0^T \|\qfrak^\delta(t)\|^{2}_{\V} \d t + 2 \delta  \int_0^T \|\qfrak^\delta(t)\|^{4}_{4}\d t 
	\nonumber\\ &	+  \left(\beta-\frac{1}{2\mu\kappa}\right) \int_0^T [ \||\mfrak_1(t)| \qfrak^\delta(t)\|_2^2 +  \|| \mfrak_1(t)| \qfrak^\delta(t)\|_2^2]   \d t
	\leq   e^T \int_0^T \| \h(t)\|^2_2 \d t, 
\end{align} 
and there exists a constant $\widehat{K}_{\h, \mu,\alpha,\beta,\kappa,T}$ independent of $\delta$ such that 
\begin{align}\label{eqn-derivative-estimates-delta}
	\left\|\frac{\d \qfrak^\delta}{\d t} \right\|_{\Lrm^{2}(0,T;\V^{\prime})+\Lrm^{\frac43}(0,T;\wi\L^\frac43)} \leq \left[\widehat{K}_{\h, \mu,\alpha,\beta,\kappa,T}+ \delta^{\frac14} \left(\frac{\wi K_{\h,T}}{2}\right)^{\frac34}\right],
\end{align}
where $\wi K_{\h,T} = e^T \int_0^T \| \h(t)\|^2_2 \d t$.
\end{theorem}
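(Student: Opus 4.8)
The plan is to prove Theorem \ref{thm-sol-delta} by a Galerkin approximation in the spatial variable, carried out backward in time because of the terminal condition $\qfrak^\delta(T)=\boldsymbol 0$. First I would change the time variable via $t\mapsto T-t$ so that \eqref{eqn-adjoint-regular} becomes a forward-in-time parabolic problem with a monotone principal part $\mu\A+\delta\mathcal C$; the damping term $\delta\mathcal C(\qfrak^\delta)=\delta\mathcal P(|\qfrak^\delta|^2\qfrak^\delta)$ supplies the $\Lrm^4(0,T;\wi\L^4)$ coercivity that the critical (3D) adjoint equation lacks on its own. Then I would project onto $\mathrm{span}\{e_1,\dots,e_n\}$ (the eigenbasis of $\A$ from Remark \ref{Rem-2.1}), obtaining a finite-dimensional ODE system whose right-hand side is continuous in the unknown and integrable in $t$ (note the coefficients $\mfrak_1,\mfrak_2\in\mathrm C([0,T];\H)\cap\Lrm^2(0,T;\V)\cap\Lrm^4(0,T;\wi\L^4)$ are fixed data), so Carathéodory's theorem gives a local solution $\qfrak^\delta_n$.

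The key a priori estimate is obtained by testing the Galerkin equation with $\qfrak^\delta_n$ itself. The bilinear term $\B(\mfrak_1,\qfrak^\delta_n)$ pairs to zero against $\qfrak^\delta_n$ by \eqref{b0}; the term $\mathcal P\{\sum_j[\nabla(\mfrak_2)_j](\qfrak^\delta_n)_j\}$ tested against $\qfrak^\delta_n$ equals $b(\qfrak^\delta_n,\mfrak_2,\qfrak^\delta_n)$ (up to sign), which is estimated exactly as in \eqref{38} by $\||\mfrak_2|\qfrak^\delta_n\|_2\|\qfrak^\delta_n\|_\V\le\mu\kappa\|\qfrak^\delta_n\|_\V^2+\frac1{4\mu\kappa}\||\mfrak_2|\qfrak^\delta_n\|_2^2$; the quadratic damping terms contribute $\frac\beta2[\||\mfrak_1|\qfrak^\delta_n\|_2^2+\||\mfrak_2|\qfrak^\delta_n\|_2^2]+\frac\beta2\|(\mfrak_1+\mfrak_2)\cdot\qfrak^\delta_n\|_2^2\ge\frac\beta2[\||\mfrak_1|\qfrak^\delta_n\|_2^2+\||\mfrak_2|\qfrak^\delta_n\|_2^2]$; the term $\delta\langle\mathcal C(\qfrak^\delta_n),\qfrak^\delta_n\rangle=\delta\|\qfrak^\delta_n\|_4^4$; and the forcing is handled by $|(\h,\qfrak^\delta_n)|\le\frac12\|\h\|_2^2+\frac12\|\qfrak^\delta_n\|_2^2$. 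Collecting terms and using Hypothesis \ref{Para-Hypo} so that $\beta-\frac1{2\mu\kappa}>0$, Gronwall's inequality yields \eqref{eqn-energy-estimates-delta} uniformly in $n$ and $\delta$ (for the terms with sign; the $\delta\|\qfrak^\delta_n\|_4^4$ bound and the two $\||\mfrak_i|\qfrak^\delta_n\|_2^2$ bounds also follow). This gives global existence of $\qfrak^\delta_n$ on $[0,T]$ and uniform bounds in $\Lrm^\infty(0,T;\H)\cap\Lrm^2(0,T;\V)$, in $\Lrm^4(0,T;\wi\L^4)$ (with the weight $\delta^{1/4}$), and in the weighted space $\int_0^T\int_{\mathfrak D}(|\mfrak_1|^2+|\mfrak_2|^2)|\qfrak^\delta_n|^2<\infty$. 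The time-derivative bound \eqref{eqn-derivative-estimates-delta} then follows by estimating each term of the equation: $\mu\A\qfrak^\delta_n$ and $\alpha\qfrak^\delta_n$ in $\Lrm^2(0,T;\V')$, $\B(\mfrak_1,\qfrak^\delta_n)$ in $\Lrm^2(0,T;\V')$ using $\|\mfrak_1\|_4\|\qfrak^\delta_n\|_4$-type bounds, the gradient term and the quadratic $\beta$-terms in $\Lrm^{4/3}(0,T;\wi\L^{4/3})$ via Hölder with the weighted estimates, and $\delta\mathcal C(\qfrak^\delta_n)$ in $\Lrm^{4/3}(0,T;\wi\L^{4/3})$ with norm $\le\delta\|\qfrak^\delta_n\|_4^3$, producing the $\delta^{1/4}(\wi K_{\h,T}/2)^{3/4}$ term after Hölder in time.

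Next I would pass to the limit $n\to\infty$: by Banach–Alaoglu extract weak-$*$ limits in $\Lrm^\infty(0,T;\H)$, weak limits in $\Lrm^2(0,T;\V)$ and $\Lrm^4(0,T;\wi\L^4)$, and by the Aubin–Lions–Simon lemma (using the uniform bound on $\partial_t\qfrak^\delta_n$) a strong limit in $\Lrm^2(0,T;\H)$, hence a.e. in $\mathfrak D\times(0,T)$ along a subsequence. The a.e. convergence lets me identify the nonlinear limits: $\mathcal C(\qfrak^\delta_n)\rightharpoonup\mathcal C(\qfrak^\delta)$ in $\Lrm^{4/3}(0,T;\wi\L^{4/3})$ (the sequence is bounded there and converges a.e.), the quadratic terms $\mathcal P\{[|\mfrak_1|^2+|\mfrak_2|^2]\qfrak^\delta_n\}$ converge using a.e. convergence plus the weighted $\Lrm^2$ bound and Vitali's theorem, and $\B(\mfrak_1,\qfrak^\delta_n)$, the gradient term pass to the limit by the weak convergence in $\Lrm^2(0,T;\V)$ against the fixed coefficients. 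Since $\mfrak_1,\mfrak_2,\h$ are fixed data, no compensated-compactness subtlety arises for them. The terminal condition $\qfrak^\delta(T)=\boldsymbol 0$ is recovered from $\qfrak^\delta\in\mathrm C([0,T];\H)$ (a consequence of the regularity $\qfrak^\delta\in\Lrm^2(0,T;\V)$, $\partial_t\qfrak^\delta\in\Lrm^2(0,T;\V')+\Lrm^{4/3}(0,T;\wi\L^{4/3})$ and a standard interpolation/Lions–Magenes lemma) together with the terminal data of the Galerkin system. Uniqueness is obtained by taking the difference of two solutions, testing with it, and using the monotonicity of $\mu\A+\delta\mathcal C$ together with the inequality \eqref{2.23} for the cubic term and the estimate \eqref{38} for the remaining linear-in-difference terms: the structure is identical to the energy identity for $\vfrak$ in Proposition \ref{prop-stability-diff}, so Gronwall forces the difference to vanish.

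Finally, the duality relation \eqref{eqn-opt-con-delta} is obtained by testing the equation \eqref{eqn-difference} for $\vfrak=\mfrak_1-\mfrak_2$ with $\qfrak^\delta$ and the regularized adjoint equation \eqref{eqn-adjoint-regular} with $\vfrak$, then subtracting. This is legitimate here — unlike in the critical state case — precisely because the $\delta\mathcal C(\qfrak^\delta)$ regularization forces $\qfrak^\delta\in\Lrm^4(0,T;\wi\L^4)$, so $\qfrak^\delta$ is an admissible test function for \eqref{eqn-difference} and, symmetrically, $\vfrak\in\Lrm^4(0,T;\wi\L^4)$ is admissible for \eqref{eqn-adjoint-regular}. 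In the subtraction, the principal terms $\mu\langle\A\vfrak,\qfrak^\delta\rangle-\mu\langle\A\qfrak^\delta,\vfrak\rangle$, $\alpha$-terms, and all $\beta$-terms cancel by the symmetric way the operators were split in \eqref{eqn-difference} and \eqref{eqn-adjoint-regular}; the two trilinear/gradient terms $b(\mfrak_1,\vfrak,\qfrak^\delta)+b(\qfrak^\delta,\mfrak_2,\vfrak)$ against $-b(\mfrak_1,\qfrak^\delta,\vfrak)+\langle\mathcal P\{\sum_j[\nabla(\mfrak_2)_j]\qfrak^\delta_j\},\vfrak\rangle$ cancel using the identities in \eqref{b0} and the fact that $\langle\mathcal P\{\sum_j[\nabla(\mfrak_2)_j]\qfrak^\delta_j\},\vfrak\rangle=b(\qfrak^\delta,\mfrak_2,\vfrak)$ up to the appropriate sign; the time-derivative terms combine via integration by parts in $t$ to $\int_0^T\frac{\d}{\d t}(\vfrak,\qfrak^\delta)\d t=(\vfrak(T),\qfrak^\delta(T))-(\vfrak(0),\qfrak^\delta(0))=0$ since $\vfrak(0)=\boldsymbol 0$ and $\qfrak^\delta(T)=\boldsymbol 0$; what survives is exactly $\int_0^T(\f_1-\f_2,\qfrak^\delta)\d t+\delta\int_0^T\langle\mathcal C(\qfrak^\delta),\vfrak\rangle\d t=\int_0^T(\h,\mfrak_1-\mfrak_2)\d t$. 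The main obstacle I anticipate is the rigorous justification of this last step — in particular verifying that the terms in the two weak formulations are individually integrable in $t$ so that the pairings and the integration by parts in time are valid (this is where the weighted $\Lrm^2$ bound $\int_0^T\int_{\mathfrak D}(|\mfrak_1|^2+|\mfrak_2|^2)|\qfrak^\delta|^2<\infty$ and the $\delta$-dependent $\Lrm^4(0,T;\wi\L^4)$ bound are essential, and why the analogous identity fails for the non-regularized system \eqref{eqn-adjoint}).
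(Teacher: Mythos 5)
Your proposal follows essentially the same route as the paper's proof: time reversal to a forward problem, Galerkin projection onto the eigenbasis of $\A$ with Carathéodory for local existence, the energy estimate obtained by testing with the approximant (using skew-symmetry of $b$, the $\mu\kappa$-split of $b(\cdot,\mfrak_2,\cdot)$, the sign of the $\beta$-terms, and Gronwall), the $\Lrm^{4/3}(0,T;\V')$ bound on the time derivative, Banach–Alaoglu and Aubin–Lions to pass to the limit with a.e. convergence identifying the nonlinearities, continuity in $\H$ via the Lions–Magenes/Chepyzhov–Vishik lemma to recover the terminal condition, uniqueness from the monotonicity of $\mu\A+\delta\mathcal C$ and \eqref{2.23}, and the duality relation \eqref{eqn-opt-con-delta} from the two weak formulations combined with integration by parts in time, using $\vfrak(0)=\boldsymbol 0$ and $\qfrak^\delta(T)=\boldsymbol 0$. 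The only small imprecision is in the sketch of the time-derivative estimate, where you mention bounding $\B(\mfrak_1,\qfrak^\delta_n)$ via ``$\|\mfrak_1\|_4\|\qfrak^\delta_n\|_4$-type bounds''; taken literally this is not uniform in $\delta$ (since $\|\qfrak^\delta_n\|_{\Lrm^4}$ carries a $\delta^{-1/4}$), and the $\delta$-independent bound must instead go through the skew-symmetry $b(\mfrak_1,\qfrak^\delta_n,\boldphi)=-b(\mfrak_1,\boldphi,\qfrak^\delta_n)$ and the weighted estimate $\||\mfrak_1|\qfrak^\delta_n\|_{\Lrm^2(0,T;\L^2)}$, exactly as the paper does.
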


\begin{proof}
Let us first note that $\qfrak^{\delta}$ is the solution of the terminal value problem \eqref{eqn-adjoint-regular}, if and only if $\pfrak^{\delta}(t) = \qfrak^{\delta}(T-t)$ is the solution of the following initial value problem:
\begin{equation}\label{eqn-adjoint-regular-forward}
	\left\{
	\begin{aligned}
		\frac{\d\pfrak}{\d t} + \mu \A \pfrak + \delta \mathcal{C}(\pfrak) - \B(\wi\mfrak_1, \pfrak) + \mathcal{P}[ \sum_{j=1}^3 [\nabla(\wi\mfrak_2)_j]\pfrak_j ]  + \alpha\pfrak  &   + \frac{\beta}{2} \mathcal{P}\{ [|\wi\mfrak_1|^2 + |\wi\mfrak_2|^2] \pfrak \}  \\    + \frac{\beta}{2}\mathcal{P}\{ [( \wi\mfrak_1 + \wi\mfrak_2)\cdot\pfrak](\wi\mfrak_1 + \wi \mfrak_2)\}  & = \wi \h, \\
		\pfrak(0) & = \boldsymbol{0},
	\end{aligned}
	\right.
\end{equation} 
where $\wi\mfrak_1(t)= \mfrak_1(T-t)$, $\wi\mfrak_2(t)= \mfrak_2(T-t)$ and $\wi\h(t)= \mathcal{P} \h(T-t)$.

Consider the finite-dimensional space $\H_{n} = \mathrm{span}\lbrace e_{1},\dots,e_{n}\rbrace$ and define the orthogonal projection $\P_n$ from $\H$ to $\H_n$ as
\begin{equation*}
	\P_n : \H \to \H_n; \quad  \H\ni \u \mapsto \P_n \u = \sum_{j=1}^{n} (\u,e_{j})e_{j}.
\end{equation*}
Notice that $\P_n$ is self-adjoint, $\|\P_n \u\|_2 \leq \|\u\|_2$ for any $\u\in \H$. Thus, the dominated convergence theorem yields, $\P_n \u \to \u,$ as $n \to \infty,$ in $\H$.

Also, we derive an orthonormal basis in $\V$, given as $\left\lbrace\hat{e}_j={\frac{1}{\sqrt{\lambda_j}}} e_j\right\rbrace_{j \in \mathbb{N}}$. Moreover, $\P_n \v = \sum\limits_{j=1}^{n} (\v,e_{j})e_{j} = \sum\limits_{j=1}^{n} (\v,\hat{e}_{j})_{\V}\hat{e}_{j}$, where $(\cdot,\cdot)_{\V}$ represents the inner product of Hilbert space $\V$, and thus $\|\P_n \v\|_{\V} \leq \|\v\|_{\V}$.


For each $n = 1,2,\ldots$, we search for an approximate solution of
the form
\begin{align*}
	\pfrak_n (x,t) = \sum_{j=1}^n w_j^n(t) e_j(x),
\end{align*}
where $w_1^n(t), \ldots, w_j^n(t)$ are unknown scalar functions of $t$ such that it satisfies the following finite-dimensional system of ordinary differential equations in $\H_n$:
\begin{equation}\label{eqn-finite-D}
	\left\{
\begin{aligned}
	\left( \frac{\d\pfrak_n(t)}{\d t} + \mu \P_n \A \pfrak_n(t) + \delta\P_n\mathcal{C}(\pfrak_n(t)) \right.   - \P_n \B(\wi\mfrak_1(t), \pfrak_n(t)) & + \P_n \mathcal{P}[ \sum_{j=1}^3 [\nabla(\wi\mfrak_2(t))_j](\pfrak_n(t))_j ] \\  + \alpha\pfrak_n(t)  
	   + \frac{\beta}{2} \P_n \mathcal{P}\{ [|\wi\mfrak_1(t)|^2 & + |\wi\mfrak_2(t)|^2] \pfrak_n(t) \}     
       \\ \left.  +  \frac{\beta}{2} \P_n \mathcal{P}\{ [( \wi\mfrak_1(t) + \wi\mfrak_2(t))\cdot\pfrak_n(t) ](\wi\mfrak_1(t) + \wi \mfrak_2(t))\}, e_j \right)   & = ( \P_n \wi \h(t), e_j), \\
	(\pfrak_n(0), e_j) & = 0, 
\end{aligned}
\right.
\end{equation}
for a.e. $t\in [0,T]$ and $j=1,\ldots,n$.   Since the coefficients of system \eqref{eqn-finite-D} are locally Lipschitz, therefore by using the Carath\'eodory’s existence theorem, there exists a local maximal solution $\pfrak_n \in \mathrm{C}([0, T^{\ast}_n]; \H_n)$, for some $0<T^{\ast}_n \leq T$ of the system \eqref{eqn-finite-D} and uniqueness is immediate from the local Lipschitz property. The time $T^{\ast}_n$ can be extended to $T$ by establishing the uniform energy estimates of the solutions satisfied by the system \eqref{eqn-finite-D}.

\vskip 2mm
\noindent
\textit{A priori estimates for $\pfrak_n$:} Multiplying $\eqref{eqn-finite-D}_1$ by $w_j^n(\cdot)$, summing up over $j=1,\ldots, n$, we get
\begin{align}\label{322}
	& \frac12 \frac{\d}{\d t}\|\pfrak_n(t)\|_2^2 + \mu \|\pfrak_n(t)\|^{2}_{\V} + \delta \|\pfrak_n(t)\|^{4}_{4} + \alpha \|\pfrak_n(t)\|^{2}_{2}  
	   +  \frac{\beta}{2} \int_{\mathfrak{D}}  [|\wi\mfrak_1(x,t)|^{2}+|\wi\mfrak_2(x,t)|^{2}]|\pfrak_n(x,t)|^2    \d x
	\nonumber\\ &   + \frac{\beta}{2} \int_{\mathfrak{D}} [\{\wi\mfrak_1(x,t) + \wi\mfrak_2(x,t)\} \cdot\pfrak_n(x,t)]^2  \d x
	  = - b(\pfrak_n(t), \wi\mfrak_2(t), \pfrak_n(t)) + (\wi\h (t), \pfrak_n(t)),
\end{align} 
for a.e. $t\in[0,T]$. Performing H\"older's and Young's inequalities, we have
\begin{align}
	\left| b(\pfrak_n, \wi\mfrak_2, \pfrak_n) \right| & =  \left| b(\pfrak_n, \pfrak_n, \wi\mfrak_2) \right| \leq \|\pfrak_n\|_{\V} \||\wi\mfrak_2|\pfrak_n\|_2 \leq  \mu\kappa \|\pfrak_n\|^2_{\V} +  \frac{1}{4\mu\kappa} \||\wi\mfrak_2|\pfrak_n\|^2_2,\label{323} \\
	|(\wi\h , \pfrak_n)|  & \leq \|\wi\h\|_2 \|\pfrak_n\|_2 \leq \frac12 \|\wi\h\|^2_2  +  \frac12  \|\pfrak_n\|^2_2,\label{324}
 \end{align}
where $\kappa$ is the same as in Hypothesis \eqref{Para-Hypo}. Combining \eqref{322}-\eqref{324}, we reach at
\begin{align}\label{3.26}
	&  \frac{\d}{\d t}\|\pfrak_n(t)\|_2^2 + 2\mu(1-\kappa) \|\pfrak_n(t)\|^{2}_{\V}  +   2\alpha \|\pfrak_n(t)\|^{2}_{2} + 2 \delta \|\pfrak_n(t)\|^{4}_{4} 
 	\nonumber\\ & +  \left(\beta-\frac{1}{2\mu\kappa}\right) [ \||\wi\mfrak_1(t)| \pfrak_n(t)\|_2^2 +  \||\wi\mfrak_1(t)| \pfrak_n(t)\|_2^2]  
	\nonumber\\ &  \leq  \|\wi\h(t)\|^2_2  +    \|\pfrak_n(t)\|^2_2,
\end{align} 
for a.e. $t\in[0,T]$. This implies, in view of Gronwall's inequality, that
\begin{align}\label{eqn-energy-estimates-reqular}
	&   \sup_{t\in[0,T]}\|\pfrak_n(t)\|_2^2 + 2\mu(1-\kappa)\int_0^T \|\pfrak_n(t)\|^{2}_{\V} \d t  + 2\alpha \int_0^T \|\pfrak_n(t)\|^{2}_{2} \d t  + 2 \delta  \int_0^T \|\pfrak_n(t)\|^{4}_{4}\d t 
  \nonumber\\ &	+  \left(\beta-\frac{1}{2\mu\kappa}\right) \int_0^T [ \||\wi\mfrak_1(t)| \pfrak_n(t)\|_2^2 +  \||\wi\mfrak_1(t)| \pfrak_n(t)\|_2^2]   \d t
\nonumber\\ & 	\leq   e^T \int_0^T \|\wi\h(t)\|^2_2 \d t \leq     e^T \int_0^T \| \h(t)\|^2_2 \d t:=  \wi K_{\h,T} .
\end{align} 
Therefore, we find 
\begin{align}\label{eqn-weak-con-1}
	\{\pfrak_n\}_{n\in\N} \text{ is uniformly bounded in } \Lrm^{\infty}(0,T;\H)\cap \Lrm^{2}(0,T;\V) \cap \Lrm^{4}(0,T;\wi\L^4),
\end{align}
\begin{align}\label{eqn-weak-con-2}
	\{|\pfrak_n|^2\pfrak_n\}_{n\in\N} \text{ is uniformly bounded in }  \Lrm^{\frac43}(0,T;\L^{\frac43}(\mathfrak{D})),
\end{align}
and
\begin{align}\label{eqn-weak-con-3}
	\{|\mfrak_1|\pfrak_n\}_{n\in\N}, \{|\mfrak_2|\pfrak_n\}_{n\in\N} \text{ are uniformly bounded in }  \Lrm^{2}(0,T;\L^{2}(\mathfrak{D})).
\end{align}

\vskip 2mm
\noindent
\textit{A priori estimates for $\partial_t\pfrak_n$}: For any $\boldphi \in \Lrm^4(0,T; \V) $, we have
\begin{align*}
& \left|	\left\langle \frac{\d \pfrak_n}{\d t} , \boldphi \right\rangle \right|
\nonumber\\	&=  \bigg| \bigg\langle -\mu \P_n \A \pfrak_n - \delta \P_n\mathcal{C}(\pfrak_n)    + \P_n \B(\wi\mfrak_1, \pfrak_n)  - \P_n \mathcal{P}[ \sum_{j=1}^3 [\nabla(\wi\mfrak_2)_j](\pfrak_n)_j ]  - \alpha\pfrak_n  
	\nonumber  \\  & \qquad -  \frac{\beta}{2} \P_n \mathcal{P}\{ [|\wi\mfrak_1|^2 + |\wi\mfrak_2|^2] \pfrak_n \}      -  \frac{\beta}{2} \P_n \mathcal{P}\{ [( \wi\mfrak_1 + \wi\mfrak_2)\cdot\pfrak_n ](\wi\mfrak_1 + \wi \mfrak_2)\} + \P_n \wi\h , \boldphi \bigg\rangle \bigg|
	\nonumber\\ &=  \bigg| - \mu (\nabla \pfrak_n, \nabla \P_n\boldphi) -  \delta \langle  \mathcal{C}(\pfrak_n), \P_n\boldphi \rangle   + b(\wi\mfrak_1, \pfrak_n, \P_n\boldphi)   - b(\pfrak_n, \wi\mfrak_2,  \P_n\boldphi)  - (\alpha\pfrak_n - \wi\h , \P_n\boldphi)
	\nonumber  \\  & \qquad - \left\langle  \frac{\beta}{2}  \{ [|\wi\mfrak_1|^2 + |\wi\mfrak_2|^2] \pfrak_n \}      +  \frac{\beta}{2}  \{ [( \wi\mfrak_1 + \wi\mfrak_2)\cdot\pfrak_n ](\wi\mfrak_1 + \wi \mfrak_2)\}  , \P_n \boldphi \right\rangle \bigg|
	\nonumber\\ & \leq C\bigg[\|\pfrak_n\|_{\V} + \||\wi\mfrak_1|\pfrak_n\|_2 + \||\wi\mfrak_2|\pfrak_n\|_2 + \|\wi\h\|_2 \bigg]\|\P_n\boldphi\|_{\V} + \delta\|\pfrak_n\|^3_{4} \|\P_n\boldphi\|_4 
	\nonumber\\ & \quad + C \bigg[\||\wi\mfrak_1|\pfrak_n\|_2 \|\wi\mfrak_1\|_4 + \||\wi\mfrak_2|\pfrak_n\|_2 \|\wi\mfrak_2\|_4 \bigg]\|\P_n\boldphi\|_4
	\nonumber\\ & \leq C\bigg[\|\pfrak_n\|_{\V} + \||\wi\mfrak_1|\pfrak_n\|_2 + \||\wi\mfrak_2|\pfrak_n\|_2 + \|\wi\h\|_2 + \delta\|\pfrak_n\|^3_{4}  
	 + \||\wi\mfrak_1|\pfrak_n\|_2 \|\wi\mfrak_1\|_4 
	   + \||\wi\mfrak_2|\pfrak_n\|_2 \|\wi\mfrak_2\|_4 \bigg]\|\P_n\boldphi\|_{\V}
	   \nonumber\\ & \leq C\bigg[\|\pfrak_n\|_{\V} + \||\wi\mfrak_1|\pfrak_n\|_2 + \||\wi\mfrak_2|\pfrak_n\|_2 + \|\wi\h\|_2 + \delta\|\pfrak_n\|^3_{4}  
	   + \||\wi\mfrak_1|\pfrak_n\|_2 \|\wi\mfrak_1\|_4 
	   + \||\wi\mfrak_2|\pfrak_n\|_2 \|\wi\mfrak_2\|_4 \bigg]\|\boldphi\|_{\V}, 
\end{align*}
which implies
\begin{align}\label{eqn-derivative}
	& \left| \int_{0}^{T} \left\langle \frac{\d \pfrak_n(t)}{\d t} , \boldphi (t) \right\rangle \d t \right| 
	\nonumber\\ & \leq C \int_{0}^{T} \bigg[\|\pfrak_n(t)\|_{\V} + \||\wi\mfrak_1(t)|\pfrak_n(t)\|_2 + \||\wi\mfrak_2(t)|\pfrak_n(t)\|_2 + \|\wi\h(t)\|_2 + \delta\|\pfrak_n(t)\|^3_{4}  
	\nonumber\\ & \quad  + \||\wi\mfrak_1(t)|\pfrak_n(t)\|_2 \|\wi\mfrak_1(t)\|_4 
	+ \||\wi\mfrak_2(t)|\pfrak_n(t)\|_2 \|\wi\mfrak_2(t)\|_4 \bigg]\|\boldphi(t)\|_{\V} \d t
\nonumber\\ & \leq C \bigg[ T^{\frac{1}{4}}\|\pfrak_n\|_{\Lrm^2(0,T;\V)} + T^{\frac{1}{4}}\|\wi\h\|_{\Lrm^2(0,T;\H)} + \delta \|\pfrak_n\|^3_{\Lrm^4(0,T;\wi \L^4)} + T^{\frac14}\left(\int_{0}^{T}\||\wi\mfrak_1(t)|\pfrak_n(t)\|_2^2\d t\right)^{\frac12} 
\nonumber\\ & \quad + T^{\frac14} \left(\int_{0}^{T}\||\wi\mfrak_2(t)|\pfrak_n(t)\|_2^2\d t\right)^{\frac12} 
 + \left(\int_{0}^{T}\||\wi\mfrak_1(t)|\pfrak_n(t)\|_2^2\d t\right)^{\frac12}\|\wi\mfrak_1\|_{\Lrm^4(0,T;\wi \L^4)}
 \nonumber\\ & \quad + \left(\int_{0}^{T}\||\wi\mfrak_2(t)|\pfrak_n(t)\|_2^2\d t\right)^{\frac12}\|\wi\mfrak_2\|_{\Lrm^4(0,T;\wi \L^4)}  \bigg] \|\boldphi\|_{\Lrm^4(0,T; \V)}. 
\end{align}
Making use of \eqref{eqn-energy-estimates-reqular} (uniform bounds), $\wi\h \in \Lrm^{2}(0,T;\H)$ and the fact that $\wi\mfrak_1, \wi\mfrak_2 \in \Lrm^{4}(0,T;\wi\L^4)$ in \eqref{eqn-derivative}, we infer that 
\begin{align}
	\{\partial_t \pfrak_n\}_{n\in\N} \;\; \text{ is uniformly bounded in } \; \Lrm^{\frac43}(0,T; \V^{\prime}).
\end{align}

\vskip 2mm
\noindent
\textit{Passing to the limit:} Using \eqref{eqn-weak-con-1}-\eqref{eqn-weak-con-3} and the  {Banach Alaoglu theorem}, we infer the existence of an element $\pfrak\in\mathrm{L}^{\infty}(0,T;\H)\cap\mathrm{L}^{2}(0,T;\V)\cap\mathrm{L}^{4}(0,T;\widetilde{\L}^{4})$ with $\frac{\d \pfrak}{\d t}\in \mathrm{L}^{\frac43}(0,T;\V')$ and $\boldsymbol{\xi}_1 , \boldsymbol{\xi}_2, \boldsymbol{\xi}_3\in \Lrm^{2}(0,T;\L^{2}(\mathfrak{D}))$ such that
\begin{align}
	\pfrak_n\xrightharpoonup{w^*}&\ \pfrak  &&\text{ in }	\mathrm{L}^{\infty}(0,T;\H),\label{S7}\\
	\pfrak_n\xrightharpoonup{w}&\ \pfrak  && \text{ in } \mathrm{L}^{2}(0,T;\V)\cap\mathrm{L}^{4}(0,T;\widetilde{\L}^{4}),\label{S8}\\
	|\pfrak_n|^2\pfrak_n  \xrightharpoonup{w}&\ \boldsymbol{\xi}  && \text{ in }  \Lrm^{\frac43}(0,T;\L^{\frac43}(\mathfrak{D})) \label{S88}\\
	|\mfrak_1|\pfrak_n  \xrightharpoonup{w}&\ \boldsymbol{\xi}_1  && \text{ in }  \Lrm^{2}(0,T;\L^{2}(\mathfrak{D})) \label{S89}\\
	|\mfrak_2|\pfrak_n  \xrightharpoonup{w}&\ \boldsymbol{\xi}_2  && \text{ in }  \Lrm^{2}(0,T;\L^{2}(\mathfrak{D})) \label{S810}\\
	( \wi\mfrak_1 + \wi\mfrak_2)\cdot\pfrak_n  \xrightharpoonup{w}&\ \boldsymbol{\xi}_3  && \text{ in }  \Lrm^{2}(0,T;\L^{2}(\mathfrak{D})) \label{S811}\\
	\frac{\d \pfrak_n}{\d t}\xrightharpoonup{w}&\frac{\d \pfrak}{\d t}  && \text{ in }\mathrm{L}^{\frac{4}{3}}(0,T;\V'),\label{S8d}
\end{align}
along a subsequence. Since $\pfrak_n\in\mathrm{L}^{2}(0,T;\V)$ and $\frac{\d \pfrak_n}{\d t}\in \mathrm{L}^{\frac{4}{3}}(0,T;\V')$, the embeddings $\V \subset \H \subset\V^{\prime}$ are continuous  and the embedding $\V \subset \H$ is compact, then  the  {Aubin-Lions compactness lemma} implies that (see \cite[Chapter III, Theorem 2.1, p.184]{Temam_1984})
\begin{align}\label{S9}
	\pfrak_n\to\pfrak \ \text{ strongly in } \ \mathrm{L}^2(0,T;\H).
\end{align}
This also implies that 
\begin{align}\label{S10}
	\pfrak_n (x,t) \to\pfrak (x,t) \ \text{ for a.e. } \ (x,t)\in [0,T]\times\mathfrak{D},
\end{align}
along a further subsequence. It provides, in view of \cite[Lemma 1.3]{Lions_1969} and \eqref{S88}-\eqref{S810}, that 
\begin{align}
	|\pfrak_n|^2\pfrak_n  \xrightharpoonup{w}&\ |\pfrak|^2\pfrak \;  \;\;\; && \text{ in } \;\;\; \Lrm^{\frac43}(0,T; \L^{\frac43}(\mathfrak{D})), \label{S11}\\
	|\wi\mfrak_1|\pfrak_n  \xrightharpoonup{w}&\ |\wi\mfrak_1|\pfrak  \;\;\;  && \text{ in }   \;\;\; \Lrm^{2}(0,T;\L^{2}(\mathfrak{D})), \label{S12}\\
	|\wi\mfrak_2|\pfrak_n  \xrightharpoonup{w}&\ |\wi\mfrak_2|\pfrak   \;\;\; && \text{ in }  \;\;\;  \Lrm^{2}(0,T;\L^{2}(\mathfrak{D})), \label{S13}\\
	( \wi\mfrak_1 + \wi\mfrak_2)\cdot\pfrak_n  \xrightharpoonup{w}&\ ( \wi\mfrak_1 + \wi\mfrak_2)\cdot\pfrak \;\;\; && \text{ in } \;\;\; \Lrm^{2}(0,T;\L^{2}(\mathfrak{D})), \label{S14}
\end{align}
which also tell
\begin{align}
	\int_0^T \int_{\mathfrak{D}}  [|\wi\mfrak_1(x,t)|^{2} + |\wi\mfrak_2(x,t)|^{2}]|\pfrak(x,t)|^2    \d x \d t < + \infty.
\end{align}
Using the convergences \eqref{S8}, \eqref{S9} and \eqref{S11}-\eqref{S13}, and the fact that $\P_n\wi\h \to \wi\h$ in $\Lrm^2(0,T;\H)$, we obtain that $\pfrak$ satisfies:
\begin{align}\label{eqn-weak-sol-forward}
	&\int_{0}^{T}\left\langle \frac{\d\pfrak(t)}{\d t} + \mu \A \pfrak(t) + \delta \mathcal{C}(\pfrak(t)) \right.   -  \B(\wi\mfrak_1(t), \pfrak(t))  +  \mathcal{P}\left\{ \sum_{j=1}^3 [\nabla(\wi\mfrak_2(t))_j](\pfrak(t))_j \right\}  + \alpha\pfrak(t)  
	\nonumber \\ & \left. + \frac{\beta}{2}  \mathcal{P}\left\{ [|\wi\mfrak_1(t)|^2 + |\wi\mfrak_2(t)|^2] \pfrak(t) \right\}      +  \frac{\beta}{2}  \mathcal{P}\{ [( \wi\mfrak_1(t) + \wi\mfrak_2(t))\cdot\pfrak(t) ](\wi\mfrak_1(t) + \wi \mfrak_2(t))\}, \v  \right\rangle g(t)  \d t   
    \nonumber\\ & = \int_{0}^{T}( \wi \h(t), \v) g(t) \d t, 
\end{align}
for all $g\in C_c^{\infty}([0,T))$ and $\v\in \V$. A calculation similar to \eqref{eqn-derivative} provides that $\frac{\d \pfrak}{\d t}\in \mathrm{L}^{2}(0,T;\V')+ \mathrm{L}^{\frac43}(0,T;\wi\L^{\frac43})$. 

By  \cite[Chapter II, Theorem 1.8]{CV}, $\pfrak\in\mathrm{L}^{2}(0,T;\V)\cap\mathrm{L}^{4}(0,T;\widetilde{\L}^{4})$ and $\frac{\d \pfrak}{\d t}\in\mathrm{L}^{2}(0,T;\V')+\mathrm{L}^{\frac{4}{3}}(0,T;\widetilde{\L}^{\frac{4}{3}})$ imply that $\pfrak\in\mathrm{C}([0,T];\H)$ and satisfies the following the energy equality:
\begin{align}\label{eqn-eeq}
	& \frac12 \frac{\d}{\d t}\|\pfrak(t)\|_2^2 + \mu \|\pfrak(t)\|^{2}_{\V} + \delta \|\pfrak(t)\|^{4}_{4} + \alpha \|\pfrak(t)\|^{2}_{2}  
	\nonumber\\ &  +  \frac{\beta}{2}  [ \||\wi\mfrak_1(t)| \pfrak(t)\|_2^2 +  \||\wi\mfrak_1(t)| \pfrak(t)\|_2^2 + \|\{\wi\mfrak_1(t) + \wi\mfrak_2(t)\} \cdot\pfrak(t)\|_2^2]   
	\nonumber\\ & = - b(\pfrak(t), \wi\mfrak_2(t), \pfrak(t)) + (\wi\h (t), \pfrak(t)),
\end{align} 
for a.e. $t\in[0,T]$. A standard argument also gives that $\pfrak(0)=\boldsymbol{0}$ (see e.g. \cite{GPG}). Hence, we conclude that there exists a weak solution of system \eqref{eqn-adjoint-regular-forward}. 

\vskip 2mm
\noindent
\textit{Uniqueness}: Let the system \eqref{eqn-adjoint-regular-forward} has two solutions $\pfrak_1$ and $\pfrak_2$ with same initial data $\mfrak_0$. Then, $\wi\pfrak = \pfrak_1 - \pfrak_2$ satisfies:
\begin{align} 
	& \frac12 \frac{\d}{\d t}\|\wi\pfrak(t)\|_2^2 + \mu \|\wi\pfrak(t)\|^{2}_{\V} + \delta \langle \mathcal{C}(\pfrak_1(t)) - \mathcal{C}(\pfrak_2(t)), \wi \pfrak \rangle + \alpha \|\wi\pfrak(t)\|^{2}_{2}  
\nonumber\\ &	+  \frac{\beta}{2} \int_{\mathfrak{D}}  [|\wi\mfrak_1(x,t)|^{2}+|\wi\mfrak_2(x,t)|^{2}]|\wi\pfrak(x,t)|^2    \d x
	   + \frac{\beta}{2} \int_{\mathfrak{D}} [\{\wi\mfrak_1(x,t) + \wi\mfrak_2(x,t)\} \cdot \wi \pfrak(x,t)]^2  \d x
	\nonumber\\ & = - b( \wi \pfrak(t), \wi\mfrak_2(t), \wi\pfrak(t)) + (\wi\h (t), \hat \pfrak(t)),
\end{align}
for a.e. $t\in[0,T]$. Now, using \eqref{2.23} and a calculation similar to \eqref{3.26} yields
\begin{align} 
	& \frac12 \frac{\d}{\d t}\|\wi\pfrak(t)\|_2^2 \leq 0,
\end{align}
hence the uniqueness follows.

\vskip 2mm
\noindent
\textit{Proof of \eqref{eqn-energy-estimates-delta}-\eqref{eqn-derivative-estimates-delta}:} We have investigated that there exists a unique weak solution $$\pfrak^{\delta}\in\mathrm{C}([0,T];\H)\cap \mathrm{L}^{2}(0,T;\V)\cap\mathrm{L}^{4}(0,T;\widetilde{\L}^{4})$$ with $\frac{\d \pfrak}{\d t}\in\mathrm{L}^{2}(0,T;\V')+\mathrm{L}^{\frac{4}{3}}(0,T;\widetilde{\L}^{\frac{4}{3}})$ of system \eqref{eqn-adjoint-regular-forward} satisfying \eqref{eqn-weak-sol-forward}. Therefore, there exists a unique weak solution $\qfrak^\delta$ of system \eqref{eqn-adjoint-regular} in the sense of Definition \ref{def-adjoint-reg}. By taking into account \eqref{eqn-eeq} and the fact that 
\begin{align*}
	\|\pfrak^{\delta}\|_{\Lrm^{\infty}(0,T;\H)} &= \|\qfrak^\delta\|_{\Lrm^{\infty}(0,T;\H)}, \;\;\;  \;  \|\pfrak^{\delta}\|_{\Lrm^{2}(0,T;\V)}  = \|\qfrak^\delta\|_{\Lrm^{2}(0,T;\V)}, \;\;\;\; \|\pfrak^{\delta}\|_{\Lrm^{4}(0,T;\wi\L^4)}  = \|\qfrak^\delta\|_{\Lrm^{4}(0,T;\wi\L^4)} \\ \||\wi\mfrak_1|\pfrak^{\delta}\|_{\Lrm^{2}(0,T;\L^2(\mathfrak{D}))} & = \||\mfrak_1|\qfrak^{\delta}\|_{\Lrm^{2}(0,T;\L^2(\mathfrak{D}))},  \;\;\; \||\wi\mfrak_2|\pfrak^{\delta}\|_{\Lrm^{2}(0,T;\L^2(\mathfrak{D}))}  = \||\mfrak_2|\qfrak^{\delta}\|_{\Lrm^{2}(0,T;\L^2(\mathfrak{D}))},
\end{align*}

we achieve \eqref{eqn-energy-estimates-delta}. Let us now prove \eqref{eqn-derivative-estimates-delta}. For $\boldphi \in\mathrm{L}^{2}(0,T;\V)\cap\mathrm{L}^{4}(0,T;\widetilde{\L}^{4})$, we have 
\begin{align}\label{eqn-derivative-delta}
	& \left| \int_{0}^{T} \left\langle \frac{\d \pfrak(t)}{\d t} , \boldphi (t) \right\rangle \d t \right| 
	\nonumber\\ & \leq  \int_{0}^{T} \bigg[\mu\|\pfrak(t)\|_{\V}  + \alpha \|\pfrak(t)\|_{2} + \||\wi\mfrak_1(t)|\pfrak(t)\|_2 + \||\wi\mfrak_2(t)|\pfrak(t)\|_2 + \|\wi\h(t)\|_2 \bigg]\|\boldphi(t)\|_{\V} \d t
    \nonumber\\ & \quad  + \delta \int_{0}^{T}\|\pfrak(t)\|^3_{4} \|\boldphi(t)\|_4 \d t
	+ \frac{3\beta}{2} \int_{0}^{T} \bigg[\||\wi\mfrak_1(t)|\pfrak(t)\|_2 \|\wi\mfrak_1(t)\|_4 + \||\wi\mfrak_2(t)|\pfrak(t)\|_2 \|\wi\mfrak_2(t)\|_4 \bigg]\|\boldphi(t)\|_4 \d t
\nonumber\\ & \leq   \bigg[\mu\|\pfrak\|_{\Lrm^2(0,T;\V)}  + \alpha \|\pfrak\|_{\Lrm^2(0,T;\H)} + \||\wi\mfrak_1|\pfrak\|_{\Lrm^2(0,T;\L^2(\mathfrak{D}))} + \||\wi\mfrak_2|\pfrak\|_{\Lrm^2(0,T;\L^2(\mathfrak{D}))} + \|\wi\h\|_{\Lrm^2(0,T;\H)}   
\nonumber\\ & \quad +  \delta \|\pfrak\|^3_{\Lrm^4(0,T;\L^4(\mathfrak{D}))}  + \frac{3\beta}{2}  \||\wi\mfrak_1|\pfrak\|_{\Lrm^2(0,T;\L^2(\mathfrak{D}))} \|\wi\mfrak_1\|_{\Lrm^4(0,T;\L^4(\mathfrak{D}))} 
\nonumber\\ & \quad + \frac{3\beta}{2}  \||\wi\mfrak_2|\pfrak\|_{\Lrm^2(0,T;\L^2(\mathfrak{D}))} \|\wi\mfrak_2\|_{\Lrm^4(0,T;\L^4(\mathfrak{D}))} \bigg] \|\boldphi\|_{\Lrm^2(0,T;\V)\cap\Lrm^4(0,T;\L^4(\mathfrak{D}))}
\nonumber\\ & \leq   \bigg[\sqrt{\frac{\mu \wi K_{\h,T}}{2(1-\kappa)}}  + \sqrt{\frac{\alpha \wi K_{\h,T}}{2}} + 2 \sqrt{\frac{\wi K_{\h,T}}{(\beta-\frac{1}{2\mu\kappa})}}  + \|\wi\h\|_{\Lrm^2(0,T;\H)}  +  \delta^{\frac14} \left(\frac{\wi K_{\h,T}}{2}\right)^{\frac34}
\nonumber\\ & \quad +  \frac{3\beta}{2} \left(\frac{K_{T, \f_1,\mfrak_0}}{2\beta}\right)^{\frac{1}{4}} \sqrt{\frac{\wi K_{\h,T}}{(\beta-\frac{1}{2\mu\kappa})}}  
 +  \frac{3\beta}{2} \left(\frac{K_{T, \f_2,\mfrak_0}}{2\beta}\right)^{\frac{1}{4}} \sqrt{\frac{\wi K_{\h,T}}{(\beta-\frac{1}{2\mu\kappa})}}  
 \bigg]\|\boldphi\|_{\Lrm^2(0,T;\V)\cap\Lrm^4(0,T;\L^4(\mathfrak{D}))} \nonumber\\ & =: \left[\widehat{K}_{\h, \mu,\alpha,\beta,\kappa,T} + \delta^{\frac14} \left(\frac{\wi K_{\h,T}}{2}\right)^{\frac34}\right] \|\boldphi\|_{\Lrm^2(0,T;\V)\cap\Lrm^4(0,T;\L^4(\mathfrak{D}))}, 
\end{align}
where we have used \eqref{eqn-sol-estimate} and \eqref{eqn-energy-estimates-reqular} in the final inequality. Making use of the fact that 
\begin{align}
	\left\|\frac{\d \pfrak^{\delta}}{\d t}\right\|_{\Lrm^{2}(0,T;\V^{\prime}) + \Lrm^{\frac43}(0,T;\wi\L^{\frac43})}  = \left\|\frac{\d \qfrak^\delta}{\d t} \right\|_{\Lrm^{2}(0,T;\V^{\prime})+\Lrm^{\frac43}(0,T;\wi\L^\frac43)}
\end{align}
we complete the proof.

\vskip 2mm
\noindent
\textit{Proof of \eqref{eqn-opt-con-delta}}: Remember that $\vfrak = \mfrak_1 - \mfrak_2$ is the solution of system \eqref{eqn-difference}. Taking the inner product of equation $\eqref{eqn-difference}_1$ with $\qfrak^{\delta}$, we have
\begin{align}\label{350}
& \int_{0}^{T} \bigg[ \left\langle\frac{\d\vfrak (t)}{\d t}, \qfrak^\delta(t) \right\rangle +\mu(\nabla\vfrak(t), \nabla\qfrak^\delta(t)) + b(\mfrak_1(t) , \vfrak(t), \qfrak^\delta(t)) + b(\vfrak(t), \mfrak_2(t), \qfrak^\delta(t)) 
\nonumber\\ & + \alpha(\vfrak(t), \qfrak^\delta(t))   
   + \frac{\beta}{2}  \langle [|\mfrak_1(t)|^2 + |\mfrak_2(t)|^2] \vfrak(t)  +   [(\mfrak_1(t) + \mfrak_2(t) )\cdot\vfrak(t)] (\mfrak_1(t)+\mfrak_2(t)), \qfrak^\delta(t)\rangle \bigg] \d t
	\nonumber\\ & = \int_{0}^{T}  (\f_1(t) - \f_2(t) , \qfrak^\delta(t)) \d t.
\end{align}
Also, taking the inner  product of equation $\eqref{eqn-adjoint-regular}_1$ with $\vfrak$, we have
\begin{align}\label{351}
	& \int_{0}^{T} \bigg[ - \left\langle\frac{\d\qfrak^\delta (t)}{\d t}, \vfrak(t) \right\rangle +\mu(\nabla\vfrak(t), \nabla\qfrak^\delta(t)) + \delta \langle \mathcal{C}(\qfrak^\delta(t)) , \vfrak(t) \rangle + b(\mfrak_1(t) , \vfrak(t), \qfrak^\delta(t))
    \nonumber\\ & + b(\vfrak(t), \mfrak_2(t), \qfrak^\delta(t))  
	  + \alpha(\vfrak(t), \qfrak^\delta(t))   
	  + \frac{\beta}{2}  \langle [|\mfrak_1(t)|^2 + |\mfrak_2(t)|^2] \vfrak(t)   
      \nonumber\\ & +   [(\mfrak_1(t) + \mfrak_2(t) )\cdot\vfrak(t)] (\mfrak_1(t)+\mfrak_2(t)), \qfrak^\delta(t)\rangle \bigg] \d t
	\nonumber\\ & = \int_{0}^{T}  (\h(t) , \vfrak(t)) \d t.
\end{align}
A combination of \eqref{350} and \eqref{351} yields
\begin{align}\label{eqn-Duality-delta}
	& \int_{0}^{T}   \left\langle\frac{\d\vfrak (t)}{\d t}, \qfrak^\delta(t) \right\rangle \d t - \int_{0}^{T} (\f_1(t) - \f_2(t) , \qfrak^\delta(t)) \d t
	\nonumber\\ & = - \int_{0}^{T}    \left\langle\frac{\d\qfrak^\delta (t)}{\d t}, \vfrak(t) \right\rangle \d t - \int_{0}^{T}  (\h(t) , \vfrak(t)) \d t  + \delta  \int_{0}^{T}   \langle \mathcal{C}(\qfrak^\delta(t)) , \vfrak(t) \rangle \d t.
\end{align}

On the other hand, we have
\begin{align}\label{eqn-IbP}
	 \int_{0}^{T}   \left\langle\frac{\d\vfrak (t)}{\d t}, \qfrak^\delta(t) \right\rangle \d t 
	& = - \int_{0}^{T}    \left\langle\frac{\d\qfrak^\delta (t)}{\d t}, \vfrak(t) \right\rangle \d t + (\vfrak(T), \qfrak^\delta(T)) -  (\vfrak(0), \qfrak^\delta(0))
	 \nonumber\\ & = - \int_{0}^{T}    \left\langle\frac{\d\qfrak^\delta (t)}{\d t}, \vfrak(t) \right\rangle \d t .
\end{align}
In view of \eqref{eqn-Duality-delta} and \eqref{eqn-IbP}, we complete the proof.
\end{proof}

\subsection{Passing $\delta$ to zero} In the next proposition, we prove the existence of a weak solution for system \eqref{eqn-adjoint} in the sense of Definition \ref{def-adjoint}, derive some related estimates, and establish a connection between this solution and difference $\mfrak_1-\mfrak_2$.

\begin{proposition}\label{prop-adjoint-diff}
Suppose that the Hypothesis \ref{Para-Hypo} is satisfied. Then, there exists at least one weak solution $\qfrak$ of system \eqref{eqn-adjoint} in the sense of Definition \ref{def-adjoint}. Additionally, we have
	\begin{align}\label{eqn-Duality}
		  \int_{0}^{T} (\f_1(t) - \f_2(t) , \qfrak(t)) \d t
	 =  \int_{0}^{T}  (\h(t) , \vfrak(t)) \d t =   \int_{0}^{T}  (\h(t) , \mfrak_1(t)-\mfrak_2(t)) \d t,
	\end{align}
and the following estimates hold:
\begin{align}\label{eqn-energy-estimates}
	&   \sup_{t\in[0,T]}\|\qfrak(t)\|_2^2 + 2\mu(1-\kappa)\int_0^T \|\qfrak(t)\|^{2}_{\V} \d t 
	 	+  \left(\beta-\frac{1}{2\mu\kappa}\right) \int_0^T [ \||\mfrak_1(t)| \qfrak(t)\|_2^2 +  \|| \mfrak_1(t)| \qfrak(t)\|_2^2]   \d t
	\nonumber\\ & \leq  \wi{K}_{\h,T}, 
\end{align} 
and   
\begin{align}\label{eqn-derivative-estimates}
	\left\|\frac{\d \qfrak}{\d t} \right\|_{\Lrm^{2}(0,T;\V^{\prime})+\Lrm^{\frac43}(0,T;\wi\L^\frac43)} \leq \widehat{K}_{\h, \mu,\alpha,\beta,\kappa,T},
\end{align}
where $\wi{K}_{\h,T}$ and $\widehat{K}_{\h, \mu,\alpha,\beta,\kappa,T}$ are same as in Theorem \ref{thm-sol-delta}.
\end{proposition}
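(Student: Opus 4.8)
The plan is to obtain $\qfrak$ as the weak limit, as $\delta\to0$, of the family $\{\qfrak^\delta\}_{\delta>0}$ furnished by Theorem \ref{thm-sol-delta}. First I would record the bounds that are uniform in $\delta\in(0,1]$: from \eqref{eqn-energy-estimates-delta}, $\{\qfrak^\delta\}$ is bounded in $\Lrm^{\infty}(0,T;\H)\cap\Lrm^2(0,T;\V)$, the families $\{|\mfrak_1|\qfrak^\delta\}$, $\{|\mfrak_2|\qfrak^\delta\}$ and $\{(\mfrak_1+\mfrak_2)\cdot\qfrak^\delta\}$ are bounded in $\Lrm^2(0,T;\L^2(\mathfrak{D}))$, and $\int_0^T\|\qfrak^\delta(t)\|_4^4\,\d t\leq\frac{\wi K_{\h,T}}{2\delta}$; from \eqref{eqn-derivative-estimates-delta} (and $\delta^{\frac14}\leq1$), $\{\partial_t\qfrak^\delta\}$ is bounded in $\Lrm^2(0,T;\V^{\prime})+\Lrm^{\frac43}(0,T;\wi\L^{\frac43})$. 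The last two facts in particular force the regularizing term to vanish, since
\begin{align*}
\|\delta\,\mathcal{C}(\qfrak^\delta)\|_{\Lrm^{\frac43}(0,T;\wi\L^{\frac43})}^{\frac43}=\delta^{\frac43}\int_0^T\|\qfrak^\delta(t)\|_4^4\,\d t\leq\frac{\delta^{\frac13}\,\wi K_{\h,T}}{2}\longrightarrow0\quad\text{as }\delta\to0.
\end{align*}

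Next, by the Banach--Alaoglu theorem I would extract, along a subsequence $\delta\to0$, the limits $\qfrak^\delta\xrightharpoonup{w^*}\qfrak$ in $\Lrm^{\infty}(0,T;\H)$, $\qfrak^\delta\xrightharpoonup{w}\qfrak$ in $\Lrm^2(0,T;\V)$, $\partial_t\qfrak^\delta\xrightharpoonup{w}\partial_t\qfrak$ in $\Lrm^2(0,T;\V^{\prime})+\Lrm^{\frac43}(0,T;\wi\L^{\frac43})$, together with $|\mfrak_1|\qfrak^\delta\xrightharpoonup{w}\boldsymbol{\xi}_1$, $|\mfrak_2|\qfrak^\delta\xrightharpoonup{w}\boldsymbol{\xi}_2$ and $(\mfrak_1+\mfrak_2)\cdot\qfrak^\delta\xrightharpoonup{w}\boldsymbol{\xi}_3$ in $\Lrm^2(0,T;\L^2(\mathfrak{D}))$. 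Since $\Lrm^2(0,T;\V^{\prime})+\Lrm^{\frac43}(0,T;\wi\L^{\frac43})\hookrightarrow\Lrm^{\frac43}(0,T;\V^{\prime})$, the embeddings $\V\subset\H\subset\V^{\prime}$ are continuous, and $\V\subset\H$ is compact, the Aubin--Lions lemma gives $\qfrak^\delta\to\qfrak$ strongly in $\Lrm^2(0,T;\H)$ and hence, along a further subsequence, $\qfrak^\delta\to\qfrak$ a.e. on $(0,T)\times\mathfrak{D}$. Exactly as in the proof of Theorem \ref{thm-sol-delta}, this a.e. convergence together with \cite[Lemma 1.3]{Lions_1969} identifies $\boldsymbol{\xi}_1=|\mfrak_1|\qfrak$, $\boldsymbol{\xi}_2=|\mfrak_2|\qfrak$, $\boldsymbol{\xi}_3=(\mfrak_1+\mfrak_2)\cdot\qfrak$, so that $\int_0^T\int_{\mathfrak{D}}[|\mfrak_1(x,t)|^2+|\mfrak_2(x,t)|^2]|\qfrak(x,t)|^2\,\d x\,\d t<\infty$, and a standard argument with test functions vanishing at $t=0$ yields $\qfrak(T)=\boldsymbol{0}$.

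Then I would pass to the limit in the weak formulation \eqref{eqn-adjoint-reg-WF}, tested against $g(t)\v$ with $g\in C_c^{\infty}([0,T))$ and $\v\in\V$: the $\mu\A\qfrak^\delta$, $\alpha\qfrak^\delta$ and $\partial_t\qfrak^\delta$ terms pass by the weak convergences just recorded; for the two transport terms I would rewrite $\langle\B(\mfrak_1,\qfrak^\delta),\v\rangle=b(\mfrak_1,\qfrak^\delta,\v)=\int_{\mathfrak{D}}(\mfrak_1\cdot\nabla)\qfrak^\delta\cdot\v\,\d x$ and $\langle\mathcal{P}[\sum_{j}[\nabla(\mfrak_2)_j]\qfrak^\delta_j],\v\rangle=b(\v,\mfrak_2,\qfrak^\delta)=\int_{\mathfrak{D}}[(\v\cdot\nabla)\mfrak_2]\cdot\qfrak^\delta\,\d x$, so that $\qfrak^\delta$ enters linearly, and pass to the limit using $\nabla\qfrak^\delta\xrightharpoonup{w}\nabla\qfrak$ in $\Lrm^2(0,T;\L^2(\mathfrak{D}))$ against the multiplier $(\mfrak_1)_i v_j\,g\in\Lrm^2(0,T;\L^2(\mathfrak{D}))$, respectively $\qfrak^\delta\xrightharpoonup{w}\qfrak$ in $\Lrm^2(0,T;\wi\L^4)$ (which follows from the $\Lrm^2(0,T;\V)$ bound and $\V\subset\wi\L^4$) against $[(\v\cdot\nabla)\mfrak_2]\,g\in\Lrm^2(0,T;\L^{\frac43}(\mathfrak{D}))$; here one uses $\mfrak_1,\mfrak_2\in\Lrm^4(0,T;\wi\L^4)$ and $\v\in\wi\L^4$. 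The three cubic terms pass by testing $\boldsymbol{\xi}_1,\boldsymbol{\xi}_2,\boldsymbol{\xi}_3$ against $|\mfrak_1|\v g$, $|\mfrak_2|\v g$ and $[(\mfrak_1+\mfrak_2)\cdot\v]g$ in $\Lrm^2(0,T;\L^2(\mathfrak{D}))$, and $\delta\mathcal{C}(\qfrak^\delta)\to0$ by the decay displayed above. This shows that $\qfrak$ satisfies \eqref{eqn-adjoint-WF}; re-running the duality estimate \eqref{eqn-derivative-delta} for $\qfrak$ with the $\delta$-term deleted shows $\partial_t\qfrak\in\Lrm^2(0,T;\V^{\prime})+\Lrm^{\frac43}(0,T;\wi\L^{\frac43})$, so $\qfrak$ is a weak solution of \eqref{eqn-adjoint} in the sense of Definition \ref{def-adjoint}, and the estimates \eqref{eqn-energy-estimates}--\eqref{eqn-derivative-estimates} then follow from \eqref{eqn-energy-estimates-delta}--\eqref{eqn-derivative-estimates-delta} by weak(-$*$) lower semicontinuity of the norms, since $\delta^{\frac14}(\wi K_{\h,T}/2)^{\frac34}\to0$. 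Finally, passing to the limit in \eqref{eqn-opt-con-delta}: $\int_0^T(\f_1(t)-\f_2(t),\qfrak^\delta(t))\,\d t\to\int_0^T(\f_1(t)-\f_2(t),\qfrak(t))\,\d t$, the middle term satisfies $|\delta\int_0^T\langle\mathcal{C}(\qfrak^\delta(t)),\vfrak(t)\rangle\,\d t|\leq\delta\|\qfrak^\delta\|_{\Lrm^4(0,T;\wi\L^4)}^3\|\vfrak\|_{\Lrm^4(0,T;\wi\L^4)}\leq\delta^{\frac14}(\wi K_{\h,T}/2)^{\frac34}\|\vfrak\|_{\Lrm^4(0,T;\wi\L^4)}\to0$ (with $\vfrak=\mfrak_1-\mfrak_2\in\Lrm^4(0,T;\wi\L^4)$ since the $\mfrak_i$ are weak solutions), and the right-hand side $\int_0^T(\h(t),\mfrak_1(t)-\mfrak_2(t))\,\d t$ is independent of $\delta$; this gives \eqref{eqn-Duality}.

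The \emph{main obstacle} is the passage to the limit in the quadratic and cubic nonlinearities. In contrast with the Galerkin limit $n\to\infty$ in the proof of Theorem \ref{thm-sol-delta}, there is here no bound uniform in $\delta$ for $\qfrak^\delta$ in $\Lrm^4(0,T;\wi\L^4)$ (that bound degenerates like $\delta^{-1}$), so the identification of the nonlinear weak limits must rely on the Aubin--Lions compactness, the a.e. convergence, \cite[Lemma 1.3]{Lions_1969}, and the weighted bounds $\||\mfrak_i|\qfrak^\delta\|_{\Lrm^2(0,T;\L^2(\mathfrak{D}))}$; one must moreover verify that each nonlinear term still defines an element of $\Lrm^2(0,T;\V^{\prime})+\Lrm^{\frac43}(0,T;\wi\L^{\frac43})$ using only the $\Lrm^2(0,T;\V)$-regularity (via $\V\subset\wi\L^4$) and those weighted bounds, so that the limit $\qfrak$ lies in the function class prescribed by Definition \ref{def-adjoint}. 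Once the uniform estimates and these identifications are in place, the remaining steps — passing to the limit in the linear part, in the terminal condition, and in the intermediate duality identity \eqref{eqn-opt-con-delta} — are routine.
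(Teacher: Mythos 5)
Your proposal is correct and follows essentially the same route as the paper: pass $\delta\to 0$ in the regularized problem using the $\delta$-uniform bounds from \eqref{eqn-energy-estimates-delta}--\eqref{eqn-derivative-estimates-delta}, extract weak/weak-$*$ limits via Banach--Alaoglu, upgrade to strong convergence in $\Lrm^2(0,T;\H)$ and a.e.\ convergence via Aubin--Lions, identify the weighted nonlinear limits with \cite[Lemma 1.3]{Lions_1969}, kill the regularizing term through the $\delta^{1/4}(\wi K_{\h,T}/2)^{3/4}$ factor, and pass to the limit in \eqref{eqn-opt-con-delta}. The paper carries out precisely these steps (it just refers back to the Galerkin passage in the proof of Theorem \ref{thm-sol-delta} for the identification of limits and to \cite[Proposition 4.5]{Arada_2014} for $\qfrak(T)=\boldsymbol 0$), so there is no genuine divergence of method.
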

\begin{proof}
	For $\delta>0$, consider $\qfrak^\delta$ as the solution of \eqref{eqn-adjoint-regular} corresponding to given data $(\h, \mfrak_1, \mfrak_2)$. Making use of \eqref{eqn-energy-estimates-delta} and \eqref{eqn-derivative-estimates-delta}, we obtain that the sequences $\{\qfrak^\delta\}_{\delta>0}$, $\{|\mfrak_1|\qfrak^\delta\}_{\delta>0}$ and $\{|\mfrak_2|\qfrak^\delta\}_{\delta>0}$, and $\{\frac{\d \qfrak^\delta}{\d t}\}_{\delta>0}$ are uniformly bounded in $\Lrm^{\infty}(0,T;\H)\cap \Lrm^{2}(0,T;\V)$, $\Lrm^{2}(0,T;\L^2(\mathfrak{D}))$ and $\Lrm^{\frac{4}{3}}(0,T;\V^{\prime})$, respectively. In addition, we also have $\{\delta^{\frac34} [|\qfrak^\delta|^2\qfrak^\delta]\}_{\delta>0}$ is uniformly bounded in $\Lrm^{\frac43}(0,T;\L^{\frac43}(\mathfrak{D}))$. Applying the similar reasoning as in the proof of Theorem \ref{thm-sol-delta}, we deduce that there exists a sequence $\{\delta_k\}_{k\in\N}$ converging to $0$ as $k\to\infty$, and $\qfrak$ such that 
	\begin{equation}\label{eqn-convergence-delta}
	\left\{	\begin{aligned}
			\qfrak^{\delta_k}\xrightharpoonup{w^*}&\ \qfrak  &&\text{ in }	\mathrm{L}^{\infty}(0,T;\H),\\
			\qfrak^{\delta_k}\xrightharpoonup{w}&\ \qfrak  && \text{ in } \mathrm{L}^{2}(0,T;\V)\cap\mathrm{L}^{s}(0,T;\H), \;\; \text{ for all } \; s>1,  \\
			\qfrak^{\delta_k} \to &\ \qfrak  &&\text{ in }	\mathrm{L}^{2}(0,T;\H), \\
			\frac{\d \qfrak^{\delta_k}}{\d t}\xrightharpoonup{w}&\frac{\d \qfrak}{\d t}  && \text{ in } \Lrm^{2}(0,T;\V^{\prime})+\Lrm^{\frac43}(0,T;\wi\L^\frac43),\\
			|\mfrak_1|\qfrak^{\delta_k}  \xrightharpoonup{w}&\ |\mfrak_1|\qfrak  &&   \text{ in }   \;\;\; \Lrm^{2}(0,T;\L^{2}(\mathfrak{D})),  \\
			|\mfrak_2|\qfrak^{\delta_k}  \xrightharpoonup{w}&\ |\mfrak_2|\qfrak  &&  \text{ in }  \;\;\;  \Lrm^{2}(0,T;\L^{2}(\mathfrak{D})),  \\
			(  \mfrak_1 +  \mfrak_2)\cdot\qfrak^{\delta_k}  \xrightharpoonup{w}&\ (  \mfrak_1 +  \mfrak_2)\cdot\qfrak  &&  \text{ in } \;\;\; \Lrm^{2}(0,T;\L^{2}(\mathfrak{D})),  
		\end{aligned}\right.
	\end{equation}
and for every $\boldphi\in \Lrm^4(0,T;\wi\L^4)$, we have
\begin{align*}
\delta_k \int_{0}^{T} \left\langle |\qfrak^{\delta_k}(t)|^2\qfrak^{\delta_k}(t) , \boldphi(t) \right\rangle \d t & \leq \delta_k^{\frac14}\|	\delta_k^{\frac34} [|\qfrak^{\delta_k}|^2\qfrak^{\delta_k}]\|_{\Lrm^{\frac43}(0,T;\L^{\frac43}(\mathfrak{D}))} \|\boldphi\|_{\Lrm^{4}(0,T;\wi\L^{4})} \\
& \to 0 \text{ as } k\to \infty.
\end{align*}
Multiplying the weak formulation   \eqref{eqn-adjoint-reg-WF} by $g\in C_c^{\infty}((0,T])$, integration over $(0,T)$, using the convergences in \eqref{eqn-convergence-delta} and passing $k\to\infty$, we deduce that $\qfrak$ satisfies:
\begin{align*}
	&\int_{0}^{T}\left\langle - \frac{\d\qfrak(t)}{\d t} + \mu \A \qfrak(t)  \right.   -  \B(\mfrak_1(t), \qfrak(t))  +  \mathcal{P}[ \sum_{j=1}^3 [\nabla( \mfrak_2(t))_j](\qfrak(t))_j ]  + \alpha\qfrak(t)  
	\\ & \left. + \frac{\beta}{2}  \mathcal{P}\{ [| \mfrak_1(t)|^2 + | \mfrak_2(t)|^2] \qfrak(t) \}      +  \frac{\beta}{2}  \mathcal{P}\{ [(  \mfrak_1(t) +  \mfrak_2(t))\cdot\qfrak(t) ]( \mfrak_1(t) +   \mfrak_2(t))\}, \v  \right\rangle  g(t) \d t    
	\\ & = \int_{0}^{T}(   \h(t), \v) g(t)  \d t, 
\end{align*}
	for all $g\in C_c^{\infty}((0,T])$ and $\v\in \V$. This shows that $\qfrak$ is weak solution of system \eqref{eqn-adjoint} in the sense of Definition \ref{def-adjoint}. In addition, identity \eqref{eqn-Duality} holds by passing to the limit in \eqref{eqn-Duality-delta}. Finally, one can prove that $\qfrak(T)=\boldsymbol{0}$ by following the same arguments as in the Step 2 of the proof of \cite[Proposition 4.5]{Arada_2014}, we are omitting the proof here.
\end{proof}

\section{Existence of an optimal pair and first-order necessary optimality conditions}\label{sec5}\setcounter{equation}{0}

This section is devoted to the main result of this work which we give in the following two theorems.

\begin{theorem}[{Existence of optimal pair }]
	Let the Hypothesis \ref{Para-Hypo} be satisfied.	Then, there exists an optimal pair $(\wi\mfrak, \wi\f)$ of the control problem \eqref{eqn-control-problem}, where $\wi\mfrak$ is the solution of system \eqref{eqn-CBF-projected} with $\f=\wi\f$.
\end{theorem}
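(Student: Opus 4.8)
The plan is to apply the direct method of the calculus of variations. Since $\mathcal{J}(\f,\mfrak)\geq 0$ for every $\f\in\mathcal{F}_{ad}$, the value $m:=\inf_{\f\in\mathcal{F}_{ad}}\mathcal{J}(\f,\mfrak)$ is finite and nonnegative, so one can select a minimizing sequence $\{\f_n\}_{n\in\N}\subset\mathcal{F}_{ad}$ with $\mathcal{J}(\f_n,\mfrak_n)\to m$ as $n\to\infty$, where $\mfrak_n$ denotes the unique weak solution of \eqref{eqn-CBF-projected} with force $\f_n$ (this is well-defined by Theorem \ref{weake}, since Hypothesis \ref{Para-Hypo} forces $2\beta\mu>1$). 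As $\mathcal{F}_{ad}$ is a bounded, closed and convex subset of the Hilbert space $\Lrm^2(0,T;\L^2(\mathfrak{D}))$, it is weakly sequentially compact and weakly closed; hence, along a subsequence (not relabelled), $\f_n\xrightharpoonup{w}\wi\f$ in $\Lrm^2(0,T;\L^2(\mathfrak{D}))$ with $\wi\f\in\mathcal{F}_{ad}$.

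Next I would pass to the limit in the state equation. The sequence $\{\f_n\}$ is bounded in $\Lrm^2(0,T;\H)$, so the energy estimate \eqref{eqn-sol-estimate} gives a uniform bound on $\{\mfrak_n\}$ in $\mathrm{C}([0,T];\H)\cap\Lrm^2(0,T;\V)\cap\Lrm^4(0,T;\wi\L^4)$, and, arguing exactly as for the bound on $\partial_t\pfrak_n$ in the proof of Theorem \ref{thm-sol-delta}, a uniform bound on $\{\partial_t\mfrak_n\}$ in $\Lrm^2(0,T;\V')+\Lrm^{\frac43}(0,T;\wi\L^{\frac43})$. By the Banach--Alaoglu theorem I extract, along a further subsequence, limits $\mfrak_n\xrightharpoonup{w^*}\wi\mfrak$ in $\Lrm^{\infty}(0,T;\H)$, $\mfrak_n\xrightharpoonup{w}\wi\mfrak$ in $\Lrm^2(0,T;\V)\cap\Lrm^4(0,T;\wi\L^4)$, $\partial_t\mfrak_n\xrightharpoonup{w}\partial_t\wi\mfrak$ in $\Lrm^2(0,T;\V')+\Lrm^{\frac43}(0,T;\wi\L^{\frac43})$, and $|\mfrak_n|^2\mfrak_n\xrightharpoonup{w}\boldsymbol{\eta}$ in $\Lrm^{\frac43}(0,T;\L^{\frac43}(\mathfrak{D}))$; the Aubin--Lions lemma (as cited in the proof of Theorem \ref{thm-sol-delta}) then yields $\mfrak_n\to\wi\mfrak$ strongly in $\Lrm^2(0,T;\H)$ and, along a still further subsequence, a.e. on $(0,T)\times\mathfrak{D}$. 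Using \cite[Lemma 1.3]{Lions_1969} one identifies $\boldsymbol{\eta}=|\wi\mfrak|^2\wi\mfrak$, and the strong $\Lrm^2(0,T;\H)$ convergence combined with the $\Lrm^2(0,T;\V)$ bound lets me pass to the limit in the convective term $\B(\mfrak_n)$; passing to the limit in the remaining (linear) terms and in the weak convergence $\f_n\xrightharpoonup{w}\wi\f$ is immediate. Standard arguments (via \cite[Chapter II, Theorem 1.8]{CV}, as in the proof of Theorem \ref{thm-sol-delta}) then show $\wi\mfrak\in\mathrm{C}([0,T];\H)$, that it attains the datum $\mfrak_0$, and that it satisfies the energy equality \eqref{eqn-energy-equality}; hence $\wi\mfrak$ is a weak solution of \eqref{eqn-CBF-projected} with force $\wi\f$ in the sense of Definition \ref{def-weak-sol}, and by Theorem \ref{weake} it is \emph{the} unique such solution.

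Finally I would invoke weak lower semicontinuity of the cost functional. Since $\mfrak_n\to\wi\mfrak$ strongly in $\Lrm^2(0,T;\L^2(\mathfrak{D}))$, the tracking term satisfies $\int_0^T\|\mfrak_n(t)-\mfrak_d(t)\|_2^2\,\d t\to\int_0^T\|\wi\mfrak(t)-\mfrak_d(t)\|_2^2\,\d t$, while the control cost $\f\mapsto\frac{\lambda}{2}\int_0^T\|\f(t)\|_2^2\,\d t$ is convex and strongly continuous on $\Lrm^2(0,T;\L^2(\mathfrak{D}))$, hence weakly lower semicontinuous, so that $\frac{\lambda}{2}\int_0^T\|\wi\f(t)\|_2^2\,\d t\leq\liminf_{n\to\infty}\frac{\lambda}{2}\int_0^T\|\f_n(t)\|_2^2\,\d t$. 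Adding these, $\mathcal{J}(\wi\f,\wi\mfrak)\leq\liminf_{n\to\infty}\mathcal{J}(\f_n,\mfrak_n)=m$; since $\wi\f\in\mathcal{F}_{ad}$ and $m$ is the infimum, equality must hold, so $(\wi\mfrak,\wi\f)$ is an optimal pair. The only genuinely delicate point is the passage to the limit in the two nonlinear terms $\B(\mfrak_n)$ and $\mathcal{C}(\mfrak_n)=\mathcal{P}(|\mfrak_n|^2\mfrak_n)$ in three dimensions, where no pointwise regularity of $\mfrak_n$ beyond the energy class is available; this is resolved precisely by the compactness plus a.e.-convergence argument already carried out for the regularized adjoint system, so no new ingredients are needed beyond those present in the paper.
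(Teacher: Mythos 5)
Your proof is correct and takes precisely the route the paper intends: the paper itself omits the details, pointing to \cite[Theorem~3.3]{MTM_EECT_2022}, and that reference uses exactly this direct-method argument (minimizing sequence, weak compactness of $\mathcal{F}_{ad}$, uniform energy bounds on $\mfrak_n$ and $\partial_t\mfrak_n$, Aubin--Lions plus a.e.\ convergence to identify the nonlinear limits via \cite[Lemma~1.3]{Lions_1969}, and weak lower semicontinuity of $\mathcal{J}$). You correctly note that Hypothesis~\ref{Para-Hypo} supplies $2\beta\mu>1$, so Theorem~\ref{weake} applies and the state map is single-valued; no gaps.
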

\begin{proof}
	The proof of this theorem closely follows from \cite[Theorem 3.3]{MTM_EECT_2022}. Therefore, we omit the details here.
\end{proof}

\begin{theorem}[First-order necessary optimality conditions]\label{thm-Main-result}
Let the Hypothesis \ref{Para-Hypo} be satisfied. Suppose that $\wi\f$ is an optimal control and $\wi\mfrak$ is the corresponding state. Then, there exists a weak solution $\wi\qfrak $ (in the sense of Definition \ref{def-main-adjoint} below)  of the following system:
	\begin{equation}\label{eqn-adjoint-NOC}
		\left\{
		\begin{aligned}
			-\frac{\d\wi\qfrak}{\d t} + \mu \A \wi\qfrak  - \B(\wi\mfrak , \wi\qfrak) + \mathcal{P}[ \sum_{j=1}^3 [\nabla (\wi\mfrak)_j]\wi\qfrak_j ]  + \alpha\wi\qfrak       +  \beta  \mathcal{P}\{ |\wi\mfrak|^2 \wi\qfrak \}     +  2 \beta \mathcal{P}\{ [\wi\mfrak \cdot\wi\qfrak ] \wi\mfrak \}  & = \mathcal{P} (\wi \mfrak - \mfrak_d), \\
			\wi\qfrak(T)&=\boldsymbol{0}.
		\end{aligned}
		\right.
	\end{equation} 
such that 
\begin{align}
	\int_{0}^{T} (\v(t) - \wi\f(t)   ,  \wi\qfrak (t) + \lambda \wi\f(t))\d t \geq 0, \;\; \; \text{ for all }\;\; \v \in \mathcal{F}_{ad}
\end{align}
\end{theorem}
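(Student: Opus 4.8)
The plan is to carry out the classical variational argument for first-order conditions, but, following \cite{Arada_2014}, to express the derivative of the cost functional through the \emph{intermediate} adjoint state introduced in Section \ref{sec4} rather than through a linearized state, and then to let the perturbation parameter tend to $0$. Fix $\v\in\mathcal{F}_{ad}$. By convexity of $\mathcal{F}_{ad}$, for $\rho\in(0,1)$ the control $\f_\rho:=\wi\f+\rho(\v-\wi\f)$ belongs to $\mathcal{F}_{ad}$; let $\mfrak_\rho$ denote the unique weak solution of \eqref{eqn-CBF-projected} with force $\f_\rho$ (Theorem \ref{weake}) and set $\zfrak_\rho:=\rho^{-1}(\mfrak_\rho-\wi\mfrak)$. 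Optimality of $\wi\f$ gives $\rho^{-1}[\mathcal{J}(\f_\rho,\mfrak_\rho)-\mathcal{J}(\wi\f,\wi\mfrak)]\ge 0$, and expanding \eqref{eqn-cost-functional} by the elementary identities $\|\mfrak_\rho-\mfrak_d\|_2^2-\|\wi\mfrak-\mfrak_d\|_2^2=(\mfrak_\rho-\wi\mfrak,\,\mfrak_\rho+\wi\mfrak-2\mfrak_d)$ and $\|\f_\rho\|_2^2-\|\wi\f\|_2^2=\rho\,(\v-\wi\f,\,\f_\rho+\wi\f)$ recasts this inequality as
\[
0\le\frac12\int_0^T\bigl(\zfrak_\rho(t),\,\mfrak_\rho(t)+\wi\mfrak(t)-2\mfrak_d(t)\bigr)\d t+\frac{\lambda}{2}\int_0^T\bigl(\v(t)-\wi\f(t),\,\f_\rho(t)+\wi\f(t)\bigr)\d t .
\]

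Then I would eliminate $\zfrak_\rho$ using the intermediate adjoint system. Since $\mfrak_\rho$ and $\wi\mfrak$ are two weak solutions of \eqref{3.13} sharing the datum $\mfrak_0$ and corresponding to forces $\f_\rho$ and $\wi\f$, Proposition \ref{prop-adjoint-diff} applies with $(\mfrak_1,\mfrak_2)=(\mfrak_\rho,\wi\mfrak)$ and the source $\h_\rho:=\frac12(\mfrak_\rho+\wi\mfrak)-\mfrak_d\in\Lrm^2(0,T;\L^2(\mathfrak{D}))$: it produces a weak solution $\qfrak^\rho$ of \eqref{eqn-adjoint} with $\qfrak^\rho(T)=\boldsymbol{0}$, together with the duality relation $\int_0^T(\f_\rho-\wi\f,\qfrak^\rho)\d t=\int_0^T(\h_\rho,\mfrak_\rho-\wi\mfrak)\d t$. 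Dividing by $\rho$ and using $\f_\rho-\wi\f=\rho(\v-\wi\f)$, $\mfrak_\rho-\wi\mfrak=\rho\zfrak_\rho$ gives precisely $\int_0^T(\v-\wi\f,\qfrak^\rho)\d t=\frac12\int_0^T(\zfrak_\rho,\,\mfrak_\rho+\wi\mfrak-2\mfrak_d)\d t$, so the previous inequality collapses to
\[
0\le\int_0^T\bigl(\v(t)-\wi\f(t),\,\qfrak^\rho(t)\bigr)\d t+\frac{\lambda}{2}\int_0^T\bigl(\v(t)-\wi\f(t),\,\f_\rho(t)+\wi\f(t)\bigr)\d t,\qquad\rho\in(0,1).
\]

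It then remains to let $\rho\to 0$. Boundedness of $\mathcal{F}_{ad}$ and Theorem \ref{weake} give a $\rho$-uniform bound for $\mfrak_\rho$ in $\mathrm{C}([0,T];\H)\cap\Lrm^2(0,T;\V)\cap\Lrm^4(0,T;\wi\L^4)$, hence for $\h_\rho$ in $\Lrm^2(0,T;\L^2(\mathfrak{D}))$; consequently the estimates \eqref{eqn-energy-estimates}--\eqref{eqn-derivative-estimates} bound $\{\qfrak^\rho\}$ uniformly in $\Lrm^\infty(0,T;\H)\cap\Lrm^2(0,T;\V)$, the sequences $\{|\mfrak_\rho|\qfrak^\rho\}$ and $\{|\wi\mfrak|\qfrak^\rho\}$ in $\Lrm^2(0,T;\L^2(\mathfrak{D}))$, and $\{\partial_t\qfrak^\rho\}$ in $\Lrm^{2}(0,T;\V')+\Lrm^{\frac43}(0,T;\wi\L^{\frac43})$. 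By the Banach--Alaoglu theorem and the Aubin--Lions lemma there is a sequence $\rho_k\to 0$ along which $\qfrak^{\rho_k}$ converges to some $\wi\qfrak$ weakly-$\ast$ in $\Lrm^\infty(0,T;\H)$, weakly in $\Lrm^2(0,T;\V)$, strongly in $\Lrm^2(0,T;\H)$ and a.e.\ on $\mathfrak{D}\times(0,T)$, while Proposition \ref{prop-stability-diff} gives $\mfrak_{\rho_k}\to\wi\mfrak$ strongly in $\mathrm{C}([0,T];\H)\cap\Lrm^2(0,T;\V)\cap\Lrm^4(0,T;\wi\L^4)$ because $\|\f_{\rho_k}-\wi\f\|_{\Lrm^2(0,T;\L^2(\mathfrak{D}))}^2=\rho_k^2\|\v-\wi\f\|_{\Lrm^2(0,T;\L^2(\mathfrak{D}))}^2\to 0$. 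Inserting these convergences into the weak formulation \eqref{eqn-adjoint-WF} of $\qfrak^{\rho_k}$ (tested against $g\,\v$, with $g\in C_c^{\infty}((0,T])$ and $\v\in\V$) and identifying the nonlinear limits by a Lions-type argument exactly as in the passage to the limit in Theorem \ref{thm-sol-delta}, one finds that $\wi\qfrak$ is a weak solution of \eqref{eqn-adjoint-NOC} in the sense of Definition \ref{def-main-adjoint}: the terms $\frac{\beta}{2}\mathcal{P}\{[|\mfrak_{\rho_k}|^2+|\wi\mfrak|^2]\qfrak^{\rho_k}\}$ and $\frac{\beta}{2}\mathcal{P}\{[(\mfrak_{\rho_k}+\wi\mfrak)\cdot\qfrak^{\rho_k}](\mfrak_{\rho_k}+\wi\mfrak)\}$ converge to $\beta\mathcal{P}\{|\wi\mfrak|^2\wi\qfrak\}$ and $2\beta\mathcal{P}\{[\wi\mfrak\cdot\wi\qfrak]\wi\mfrak\}$, and $\mathcal{P}\h_{\rho_k}\to\mathcal{P}(\wi\mfrak-\mfrak_d)$; the terminal condition $\wi\qfrak(T)=\boldsymbol{0}$ is recovered exactly as in Step 2 of the proof of \cite[Proposition 4.5]{Arada_2014}. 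Passing to the limit in the last displayed inequality — the adjoint term by the weak convergence of $\qfrak^{\rho_k}$ in $\Lrm^2(0,T;\H)$ and the control term by $\f_{\rho_k}\to\wi\f$ in $\Lrm^2(0,T;\L^2(\mathfrak{D}))$ — yields $0\le\int_0^T(\v(t)-\wi\f(t),\,\wi\qfrak(t)+\lambda\wi\f(t))\d t$. Since \eqref{eqn-adjoint-NOC} is a linear terminal-value problem whose weak solution is unique under Hypothesis \ref{Para-Hypo} (the $\beta$-terms absorb the convective contribution $b(\cdot,\wi\mfrak,\cdot)$ when the difference of two solutions is tested against itself, as in the uniqueness argument of Theorem \ref{thm-sol-delta}), the limit $\wi\qfrak$ does not depend on $\v$ nor on the chosen subsequence, and the inequality holds with one and the same $\wi\qfrak$ for all $\v\in\mathcal{F}_{ad}$, which is the desired conclusion.

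The step I expect to be the main obstacle is this last limit $\rho\to 0$ in the nonlinear intermediate adjoint equation. Two points need care: first, every a priori bound must be genuinely independent of $\rho$, which holds because \eqref{eqn-energy-estimates}--\eqref{eqn-derivative-estimates} depend on the data only through $\|\h_\rho\|_{\Lrm^2(0,T;\L^2(\mathfrak{D}))}$ (bounded uniformly on $\mathcal{F}_{ad}$) and the state bounds, and because Hypothesis \ref{Para-Hypo} makes $1-\kappa>0$ and $\beta-\frac{1}{2\mu\kappa}>0$; second, the weak limits of the damping and Forchheimer products must be identified, for which the strong convergence of $\mfrak_\rho$ from Proposition \ref{prop-stability-diff} together with the $\Lrm^2$-bounds on $|\mfrak_\rho|\qfrak^\rho$ (hence on $(\mfrak_\rho+\wi\mfrak)\cdot\qfrak^\rho$) is indispensable, since in three dimensions $\qfrak^\rho$ is not itself controlled in $\Lrm^4(0,T;\wi\L^4)$. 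The remaining ingredients — convexity, the difference quotient, and the algebraic use of the duality identity \eqref{eqn-Duality} — are routine.
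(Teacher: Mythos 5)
Your proposal follows the same overall strategy as the paper: pass from the optimal pair to the difference quotient, trade the term involving $\zfrak_\rho$ for an adjoint term via the intermediate duality identity of Proposition \ref{prop-adjoint-diff}, and then let $\rho\to 0$ using the $\rho$-uniform bounds \eqref{eqn-energy-estimates}--\eqref{eqn-derivative-estimates} and the strong convergence $\mfrak_\rho\to\wi\mfrak$ from Proposition \ref{prop-stability-diff}. The one genuine difference is algebraic: you use the symmetric split $\|\mfrak_\rho-\mfrak_d\|_2^2-\|\wi\mfrak-\mfrak_d\|_2^2=(\mfrak_\rho-\wi\mfrak,\mfrak_\rho+\wi\mfrak-2\mfrak_d)$ and feed the $\rho$-dependent source $\h_\rho=\tfrac12(\mfrak_\rho+\wi\mfrak)-\mfrak_d$ into the intermediate adjoint, whereas the paper keeps the source fixed at $\wi\mfrak-\mfrak_d$ and carries along the explicit remainders $\tfrac{\rho}{2}\int\|\zfrak_\rho\|_2^2$ and $\tfrac{\rho\lambda}{2}\int\|\u-\wi\f\|_2^2$ (absorbed using \eqref{eqn-rho-diff}). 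The two variants are equivalent: yours avoids the remainder terms at the cost of having to also pass to the limit in the source, which you correctly do via $\mfrak_\rho\to\wi\mfrak$. The identification of the nonlinear limits is handled as in the paper.

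The one step that is not on solid ground is your concluding uniqueness claim for the limit adjoint $\wi\qfrak$. You invoke the energy argument used for the regularized system in Theorem \ref{thm-sol-delta}, but that argument is exactly the place where the extra $\delta\,\mathcal{C}(\cdot)$ term does real work: it puts $\qfrak^\delta$ in $\Lrm^4(0,T;\wi\L^4)$, which is precisely the regularity needed to justify pairing $\partial_t\qfrak^\delta$ (whose $\Lrm^{4/3}(0,T;\wi\L^{4/3})$ component carries the Forchheimer-type terms) against $\qfrak^\delta$ itself and to invoke \cite[Chapter II, Theorem 1.8]{CV}. For the limit equation \eqref{eqn-adjoint-NOC} this regularization is gone, and a weak solution in the sense of Definition \ref{def-main-adjoint} is only in $\Lrm^\infty(0,T;\H)\cap\Lrm^2(0,T;\V)$ with $|\wi\mfrak|\wi\qfrak\in\Lrm^2(0,T;\L^2(\mathfrak{D}))$; in 3D this is \emph{not} $\Lrm^4(0,T;\wi\L^4)$, so the chain rule $\tfrac{\d}{\d t}\|\w\|_2^2=2\langle\partial_t\w,\w\rangle$ for $\w=\qfrak_1-\qfrak_2$ cannot be invoked directly — this is exactly the regularity obstruction the whole intermediate-adjoint construction was designed to circumvent. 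Dropping the unsupported uniqueness claim brings you back to the paper's own argument, which (implicitly) leaves the question of whether the extracted $\wi\qfrak$ is independent of the test direction $\v$ and of the chosen subsequence unaddressed; if you want to close that gap rigorously you would need either to prove uniqueness of \eqref{eqn-adjoint-NOC} by a mollification/truncation argument adapted to the mixed duality structure, or to run a diagonal extraction over a countable dense subset of $\mathcal{F}_{ad}$.
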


Let us now provide the definition of weak solution to system \eqref{eqn-adjoint-NOC}.
\begin{definition}\label{def-main-adjoint}
Let $\wi\mfrak$ be the solution of system \eqref{eqn-CBF-projected} with $\f=\wi\f$.	A function  $\qfrak\in\mathrm{L}^{\infty}(0,T;\H)\cap\mathrm{L}^2(0,T;\V)$ with  
	\begin{align*}
	\partial_t\qfrak\in\Lrm^{2}(0,T;\V^{\prime})+\Lrm^{\frac43}(0,T;\wi\L^\frac43),\;\; 	\int_{0}^{T}	\int_{\mathfrak{D}} |\wi\mfrak(x,t)|^{2}|\wi\qfrak(x, t)|^2   \d x \d t   < + \infty, \;\;\;  \mbox{and $\wi\qfrak(0)= \boldsymbol{0}$,}
	\end{align*}
	is called a \emph{weak solution} to the system \eqref{eqn-adjoint-NOC}, if for $  \mfrak_d \in\mathrm{L}^2(0,T;\H)$ and $\v\in\V$, $\qfrak(\cdot)$ satisfies:
	\begin{equation} 	 
	\begin{aligned}
			& \bigg\langle-\frac{\d\wi\qfrak}{\d t} + \mu \A \wi\qfrak - \B(\wi\mfrak, \qfrak)  + \mathcal{P}[ \sum_{j=1}^3 [\nabla(\wi\mfrak)_j]\qfrak_j ]   + \alpha\qfrak      +  \beta  \mathcal{P}\{ |\wi\mfrak|^2  \wi\qfrak \}       + 2\beta \mathcal{P}\{ [\wi\mfrak\cdot\wi\qfrak]\wi\mfrak\} ,\v\bigg\rangle \\ &=(\wi\mfrak - \mfrak_d, \v),
		\end{aligned}
		 	\end{equation}
 for a.e.  $t\in[0,T]$.
\end{definition}

In order to establish the first-order necessary optimality conditions (or proving Theorem \ref{thm-Main-result}), we provide intermediate optimality conditions which constitute key steps
 in the subsequent analysis. 

Assume that $(\wi\mfrak, \wi\f)$ is a solution of control problem \eqref{eqn-control-problem} and let $\u\in \mathcal{F}_{ad}$. For $0<\rho<1$, suppose that $\wi\f_{\rho} = \wi\f + \rho (\u - \wi\f)$ and $\wi\mfrak_\rho$ is the solution of system \eqref{eqn-CBF-projected} corresponding to $\wi\f_\rho$.

\begin{theorem}
Let the Hypothesis \ref{Para-Hypo} be satisfied. There exists a weak solution $\wi\qfrak_\rho \in\mathrm{L}^{\infty}(0,T;\H)\cap\mathrm{L}^2(0,T;\V)$ with  $\qfrak(T)=\boldsymbol{0}$,
\begin{align*}
	\partial_t\wi\qfrak_\rho\in \mathrm{L}^{2}(0,T;\mathbb{V}')+\mathrm{L}^{\frac{4}{3}}(0,T;\wi\L^{\frac{4}{3}}), \text{ and } \;\;\;	\int_{0}^{T}	\int_{\mathfrak{D}}  [|\wi\mfrak(x,t)|^{2}+|\wi\mfrak_\rho(x,t)|^{2}]|\wi\qfrak_\rho(x, t)|^2   \d x \d t   < + \infty,
\end{align*}
 to the following system: 
 \begin{equation}\label{eqn-adjoint-diff-rho}
 	\begin{aligned}
 		-\frac{\d\qfrak}{\d t} + \mu \A \qfrak - \B(\wi\mfrak, \qfrak) + \mathcal{P}[ \sum_{j=1}^3 [\nabla(\wi\mfrak_\rho)_j]\qfrak_j ]  + \alpha\qfrak  & + \frac{\beta}{2} \mathcal{P}\{ [|\wi\mfrak|^2 + |\wi\mfrak_\rho|^2] \qfrak \} \\  + \frac{\beta}{2}\mathcal{P}\{ [(\wi\mfrak + \wi\mfrak_\rho)\cdot\qfrak](\wi\mfrak+\wi\mfrak_\rho)\}  & = \mathcal{P}(\wi\mfrak - \mfrak_d), 
 	\end{aligned}
 \end{equation}
 such that 
 \begin{align}\label{eqn-IOC}
 	\int_{0}^{T} (\u(t) - \wi\f(t)   ,  \wi\qfrak_\rho (t) + \lambda \wi\f(t))\d t + \frac{\rho}{2} \int_{0}^{T} \left\|\frac{\wi\mfrak_\rho(t) - \wi\mfrak(t)}{\rho}\right\|^2_2\d t + \frac{\rho\lambda}{2} \int_{0}^{T}  \|\u(t) - \wi\f (t) \|^2_2\d t \geq 0, 
 \end{align}
 for all  $\u \in  \mathcal{F}_{ad}$. Moreover, $\wi\qfrak_\rho$ satisfies the following estimates:
 \begin{equation}
 \label{eqn-adjoint-energy-rho}
 	\left\{
 	\begin{aligned}
 		\|\wi\qfrak_\rho\|^2_{\Lrm^{\infty}(0,T;\H)}   \leq \wi {K}_{\wi\mfrak - \mfrak_d,T} , \;\;\;    \|\wi\qfrak_\rho\|^2_{\Lrm^{2}(0,T;\V)} & \leq \frac{\wi {K}_{\wi\mfrak - \mfrak_d ,T}}{2\mu(1-\kappa)}  , 
 		\\ \||\wi\mfrak|\wi\qfrak_\rho\|^2_{\Lrm^{2}(0,T;\L^2(\mathfrak{D}))}  +  \||\wi\mfrak_\rho|\wi\qfrak_\rho\|^2_{\Lrm^{2}(0,T;\L^2(\mathfrak{D}))} & \leq \frac{\wi {K}_{\wi\mfrak - \mfrak_d ,T}}{\beta-\frac{1}{2\mu\kappa}},  
 \\ 	\left\|\frac{\d \wi\qfrak_\rho}{\d t} \right\|_{\mathrm{L}^{2}(0,T;\mathbb{V}')+\mathrm{L}^{\frac{4}{3}}(0,T;\wi\L^{\frac{4}{3}})} & \leq \widehat{K}_{\wi\mfrak - \mfrak_d , \mu,\alpha,\beta,\kappa,T},
 	\end{aligned}
 \right.
 \end{equation}
where $\wi{K}$ and $\widehat{K}$ are  defined  similarly as in Proposition \ref{prop-adjoint-diff}.
\end{theorem}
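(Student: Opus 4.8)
The plan is to recognize system \eqref{eqn-adjoint-diff-rho} as a particular instance of the intermediate adjoint system \eqref{eqn-adjoint} already analysed in Section \ref{sec4}, and then to extract the inequality \eqref{eqn-IOC} from the scalar minimality of $(\wi\mfrak,\wi\f)$ by means of the duality identity of Proposition \ref{prop-adjoint-diff}.

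First I would set $\f_1=\wi\f$, $\f_2=\wi\f_\rho$, $\mfrak_1=\wi\mfrak$, $\mfrak_2=\wi\mfrak_\rho$ and $\h=\wi\mfrak-\mfrak_d$. Since $\mathcal{F}_{ad}$ is convex, $\wi\f_\rho=\wi\f+\rho(\u-\wi\f)\in\mathcal{F}_{ad}$ for $0<\rho<1$, so by Theorem \ref{weake} $\wi\mfrak_\rho$ is the unique weak solution of \eqref{eqn-CBF-projected} with force $\wi\f_\rho$ and shares the initial datum $\mfrak_0$ with $\wi\mfrak$; moreover $\h=\wi\mfrak-\mfrak_d\in\Lrm^2(0,T;\L^2(\mathfrak{D}))$ because $\wi\mfrak\in\Lrm^\infty(0,T;\H)$ and $\mfrak_d\in\Lrm^2(0,T;\L^2(\mathfrak{D}))$. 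With these identifications \eqref{eqn-adjoint} coincides with \eqref{eqn-adjoint-diff-rho}, so Proposition \ref{prop-adjoint-diff} immediately provides a weak solution $\wi\qfrak_\rho$ in the sense of Definition \ref{def-adjoint} (hence with $\wi\qfrak_\rho(T)=\boldsymbol{0}$ and the required weighted integrability) together with the estimates \eqref{eqn-energy-estimates}--\eqref{eqn-derivative-estimates}, which, read with $\h=\wi\mfrak-\mfrak_d$, are exactly \eqref{eqn-adjoint-energy-rho} (with $\wi K_{\wi\mfrak-\mfrak_d,T}=e^T\int_0^T\|\wi\mfrak(t)-\mfrak_d(t)\|_2^2\,\d t$). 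Proposition \ref{prop-adjoint-diff} also supplies the duality relation $\int_0^T(\f_1-\f_2,\wi\qfrak_\rho)\,\d t=\int_0^T(\h,\mfrak_1-\mfrak_2)\,\d t$; since $\f_1-\f_2=\wi\f-\wi\f_\rho=-\rho(\u-\wi\f)$ and $\mfrak_1-\mfrak_2=\wi\mfrak-\wi\mfrak_\rho$, this reads
\[
   \int_0^T(\wi\mfrak-\mfrak_d,\wi\mfrak_\rho-\wi\mfrak)\,\d t=\rho\int_0^T(\u-\wi\f,\wi\qfrak_\rho)\,\d t .
\]

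Next I would exploit optimality. Because $\wi\f_\rho\in\mathcal{F}_{ad}$, minimality of $\mathcal{J}$ at $(\wi\f,\wi\mfrak)$ gives $\mathcal{J}(\wi\f_\rho,\wi\mfrak_\rho)\ge\mathcal{J}(\wi\f,\wi\mfrak)$; expanding the two quadratic terms of \eqref{eqn-cost-functional} around $\wi\mfrak$ and $\wi\f$ using $\wi\f_\rho-\wi\f=\rho(\u-\wi\f)$ and $\wi\mfrak_\rho-\mfrak_d=(\wi\mfrak-\mfrak_d)+(\wi\mfrak_\rho-\wi\mfrak)$ yields
\begin{align*}
   0 & \le \int_0^T(\wi\mfrak-\mfrak_d,\wi\mfrak_\rho-\wi\mfrak)\,\d t+\frac12\int_0^T\|\wi\mfrak_\rho-\wi\mfrak\|_2^2\,\d t \\
     & \quad +\rho\lambda\int_0^T(\wi\f,\u-\wi\f)\,\d t+\frac{\rho^2\lambda}{2}\int_0^T\|\u-\wi\f\|_2^2\,\d t .
\end{align*}
Substituting the duality relation for the first integral, dividing by $\rho>0$, rewriting $\frac{1}{2\rho}\|\wi\mfrak_\rho-\wi\mfrak\|_2^2=\frac{\rho}{2}\big\|\rho^{-1}(\wi\mfrak_\rho-\wi\mfrak)\big\|_2^2$, and grouping $\int_0^T(\u-\wi\f,\wi\qfrak_\rho)\,\d t+\lambda\int_0^T(\wi\f,\u-\wi\f)\,\d t=\int_0^T(\u-\wi\f,\wi\qfrak_\rho+\lambda\wi\f)\,\d t$ produces precisely \eqref{eqn-IOC}.

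The argument is short because all the analytic difficulty --- existence, uniqueness and the $\delta\to0$ passage for the regularized adjoint equation, and the duality identity itself --- is already contained in Theorem \ref{thm-sol-delta} and Proposition \ref{prop-adjoint-diff}. The only points deserving attention here are: (i) verifying that the triple $(\h,\mfrak_1,\mfrak_2)=(\wi\mfrak-\mfrak_d,\wi\mfrak,\wi\mfrak_\rho)$ satisfies the structural hypotheses of Proposition \ref{prop-adjoint-diff} (common initial datum $\mfrak_0$, and $\h\in\Lrm^2(0,T;\L^2(\mathfrak{D}))$); and (ii) keeping the signs straight in the duality identity, where the factor $\f_1-\f_2=-\rho(\u-\wi\f)$ is exactly what converts the cost-functional cross term into $\rho\int_0^T(\u-\wi\f,\wi\qfrak_\rho)\,\d t$. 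Beyond this bookkeeping there is no genuine obstacle.
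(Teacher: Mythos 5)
Your proposal is correct and follows essentially the same route as the paper: the paper likewise expands the difference quotient of $\mathcal{J}$, invokes Proposition \ref{prop-adjoint-diff} (with the same identification $\h=\wi\mfrak-\mfrak_d$, $\mfrak_1=\wi\mfrak$, $\mfrak_2=\wi\mfrak_\rho$) to obtain the duality identity, and combines the two to produce \eqref{eqn-IOC}. Your write-up is merely more explicit about which choices of $(\f_1,\f_2,\mfrak_1,\mfrak_2,\h)$ are being fed into Proposition \ref{prop-adjoint-diff} and about the sign bookkeeping $\f_1-\f_2=-\rho(\u-\wi\f)$, which the paper leaves implicit.
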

\begin{proof}
	By the definition of cost functional  $\mathcal{J}$, we have
	\begin{align}\label{eqn-cost-variation}
		\frac{\mathcal{J}(\wi\mfrak_\rho , \wi\f_\rho) - \mathcal{J}(\wi\mfrak , \wi\f)}{\rho} 
		  & = \int_{0}^{T} [(\frac{\wi\mfrak_\rho(t) - \wi\mfrak(t)}{\rho} , \wi\mfrak(t) - \mfrak_d(t))  + (\lambda\wi\f(t) , \u(t) - \wi\f(t))] \d t
		  \nonumber\\ & \quad +
		   \frac{\rho}{2} \int_{0}^{T} \left\|\frac{\wi\mfrak_\rho(t) - \wi\mfrak(t)}{\rho}\right\|^2_2\d t + \frac{\rho\lambda}{2} \int_{0}^{T}  \|\u(t) - \wi\f (t) \|^2_2\d t \geq 0.
	\end{align}
In view of Proposition \ref{prop-adjoint-diff}, system \eqref{eqn-adjoint-diff-rho} has a weak solution $\wi\qfrak_\rho\in\mathrm{L}^{\infty}(0,T;\H)\cap\mathrm{L}^2(0,T;\V)$ with  $\wi\qfrak_\rho(0) = \boldsymbol{0}$,
\begin{align*}
\partial_t\wi\qfrak_\rho\in\mathrm{L}^{2}(0,T;\mathbb{V}')+\mathrm{L}^{\frac{4}{3}}(0,T;\wi\L^{\frac{4}{3}})  \; \text{ and }\;	\int_{0}^{T}	\int_{\mathfrak{D}}  [|\wi\mfrak(x,t)|^{2}+|\wi\mfrak_\rho(x,t)|^{2}]|\wi\qfrak_\rho(x, t)|^2   \d x \d t  < + \infty,
\end{align*}
which satisfy
	\begin{align}\label{eqn-Duality-rho}
	\int_{0}^{T} ( \u(t) - \wi\f(t) , \wi\qfrak_\rho(t)) \d t
	=  \int_{0}^{T}  (\wi\mfrak(t) - \mfrak_d(t) , \frac{\wi\mfrak_\rho(t) - \wi\mfrak(t)}{\rho}) \d t,
\end{align}
and all the estimates in \eqref{eqn-adjoint-energy-rho}.  Finally, a combination of \eqref{eqn-cost-variation} and \eqref{eqn-Duality-rho} gives \eqref{eqn-IOC}.
\end{proof}

\subsection{Proof of Theorem \ref{thm-Main-result}}
Proposition \ref{prop-stability-diff} tells that the following estimates hold:
\begin{align}\label{eqn-rho-diff}
	 &\sup_{0\leq t \leq T} \|\wi\mfrak_\rho(t) - \wi\mfrak(t)\|_2^2 + 2 \mu(1-\kappa) \int_{0}^{T} \|\wi\mfrak_\rho(t) - \wi\mfrak(t)\|^2_{\V} \d t    +  \frac{1}{2}\left(\beta - \frac{1}{2\mu\kappa}\right) \int_{0}^{T} \|\wi\mfrak_\rho(t) - \wi\mfrak(t) \|_{4}^4 \d t  \nonumber\\ & \leq \rho^2 e^T \int_{0}^{T}\|\v(t) -\wi\f(t)\|^2_{2}\d t.
\end{align}
We infer from the above estimates that 
\begin{align}\label{eqn-strong-rho}
	\mbox{$\wi\mfrak_\rho$ strongly converges to $\wi\mfrak$ in $\mathrm{L}^{\infty}(0,T;\H)\cap\mathrm{L}^2(0,T;\V)\cap \mathrm{L}^4(0,T;\wi\L^4)$ as $\rho\to 0$.}
\end{align}
 In addition, due to \eqref{eqn-adjoint-energy-rho}, there exists a subsequence $\{\rho_k\}_{k\in\N}$ converging to $0$ as $k\to\infty$, such that 
 	\begin{equation}\label{eqn-convergence-rho}
 	\left\{	\begin{aligned}
 		\wi\qfrak_{\rho_k}\xrightharpoonup{w}&\ \wi\qfrak  && \text{ in } \mathrm{L}^{2}(0,T;\V)\cap\mathrm{L}^{s}(0,T;\H), \;\; \text{ for all } \; s>1,  \\
 		\wi\qfrak_{\rho_k} \to &\ \wi\qfrak  &&\text{ in }	\mathrm{L}^{2}(0,T;\H), \\
 		\frac{\d \wi\qfrak_{\rho_k}}{\d t}\xrightharpoonup{w}&\frac{\d \wi\qfrak}{\d t}  && \text{ in }\mathrm{L}^{\frac{4}{3}}(0,T;\V'),\\
 		|\wi\mfrak|\wi\qfrak_{\rho_k}  \xrightharpoonup{w}&\ |\wi\mfrak|\wi\qfrak  &&   \text{ in }   \;\;\; \Lrm^{2}(0,T;\L^{2}(\mathfrak{D})), \\
 		\wi\mfrak \cdot \wi\qfrak_{\rho_k}  \xrightharpoonup{w}&\ \wi\mfrak \cdot \wi\qfrak  &&   \text{ in }   \;\;\; \Lrm^{2}(0,T;\L^{2}(\mathfrak{D})). 
 	\end{aligned}\right.
 \end{equation}
  Note that $\wi\qfrak_{\rho_k}$ satisfies the following weak formulation:
\begin{align}\label{eqn-49}
&\int_{0}^{T}	\bigg\langle -\frac{\d\wi\qfrak_{\rho_k}(t)}{\d t} + \mu \A \wi\qfrak_{\rho_k}(t) - \B(\wi\mfrak(t), \wi\qfrak_{\rho_k}(t))  +  [ \sum_{j=1}^3 [\nabla(\wi\mfrak_{\rho_k}(t))_j](\wi\qfrak_{\rho_k}(t))_j ]   + \alpha\wi\qfrak_{\rho_k}(t)   
\nonumber\\ & + \frac{\beta}{2}  \{ [|\wi\mfrak(t)|^2 + |\wi\mfrak_{\rho_k}(t)|^2] \wi\qfrak_{\rho_k}(t) \}      + \frac{\beta}{2}  \{ [(\wi\mfrak (t) + \wi\mfrak_{\rho_k}(t))\cdot\wi\qfrak_{\rho_k}(t)](\wi\mfrak(t)+\wi\mfrak_{\rho_k}(t))\}, \boldphi\bigg\rangle g(t) \d t 
\nonumber\\ & = \int_{0}^{T} (\wi\mfrak(t) - \mfrak_d(t), \boldphi) g(t) \d t ,
\end{align}
for all $\boldphi \in \V$ and $g\in C_c^{\infty}((0,+\infty))$.
  Using the strong and weak convergences in \eqref{eqn-convergence-rho}, we aim to pass to the limit as $k\to\infty$ in \eqref{eqn-49}.
\vskip 2mm
\noindent
\textit{Step 1}: Applying the convergences provided in \eqref{eqn-convergence-rho}, we get
\begin{align}
\lim_{k\to\infty}	\int_{0}^{T}	\bigg\langle -\frac{\d\wi\qfrak_{\rho_k}(t)}{\d t} , \boldphi\bigg\rangle g(t) \d t  
& = \int_{0}^{T}	\bigg\langle -\frac{\d\wi\qfrak(t)}{\d t} , \boldphi\bigg\rangle g(t) \d t, \\
	\lim_{k\to\infty}	\int_{0}^{T}	\langle \mu\A\wi\qfrak_{\rho_k}(t) + \alpha \wi\qfrak_{\rho_k} , \boldphi \rangle g(t) \d t & = \int_{0}^{T}	\langle \mu\A\wi\qfrak (t) + \alpha \wi\qfrak  , \boldphi \rangle g(t) \d t, \\
	\lim_{k\to\infty}	\int_{0}^{T}	\langle      |\wi\mfrak(t)|^2  \wi\qfrak_{\rho_k}(t)  , \boldphi \rangle g(t) \d t &  = \lim_{k\to\infty}	\int_{0}^{T}	\langle    |\wi\mfrak(t)|  \wi\qfrak_{\rho_k}(t)   , |\wi\mfrak(t)| \boldphi \rangle g(t) \d t
	\nonumber\\ & = \int_{0}^{T}	\langle    |\wi\mfrak(t)|  \wi\qfrak (t)   , |\wi\mfrak(t)| \boldphi \rangle g(t) \d t
    \nonumber \\ & = \int_{0}^{T}	\langle      \{ |\wi\mfrak(t)|^2  \wi\qfrak (t) \}  , \boldphi \rangle g(t) \d t .\label{eqn-414}
\end{align}

Now, we consider
\begin{align}
&	\int_{0}^{T}	\langle      |\wi\mfrak_{\rho_k}(t)|^2  \wi\qfrak_{\rho_k}(t)   , \boldphi \rangle g(t) \d t 
\nonumber\\ & =  	\underbrace{\int_{0}^{T}	\langle      |\wi\mfrak (t)|^2 \wi\qfrak_{\rho_k}(t)   , \boldphi \rangle g(t) \d t}_{I_{1,k}}
 +	\underbrace{\int_{0}^{T}	\langle      [(\wi\mfrak_{\rho_k}(t)-\wi\mfrak(t)) \cdot  \wi\mfrak(t)] \wi\qfrak_{\rho_k}(t)   , \boldphi \rangle g(t) \d t}_{I_{2,k}}
\nonumber\\ & \quad   +  \underbrace{\int_{0}^{T}	\langle      [\wi\mfrak_{\rho_k}(t)\cdot  (\wi\mfrak_{\rho_k}(t)- \wi\mfrak(t))] \wi\qfrak_{\rho_k}(t)   , \boldphi \rangle g(t) \d t}_{I_{3,k}}
\end{align}
Due to strong convergence in \eqref{eqn-strong-rho} and uniform estimate in \eqref{eqn-adjoint-energy-rho}, it is immediate that $\lim\limits_{k\to \infty}I_{2,k} = \lim\limits_{k\to \infty}I_{3,k}= 0.$ Therefore, along with \eqref{eqn-414}, we obtain 
\begin{align}
	\lim\limits_{k\to \infty} \int_{0}^{T}	\langle      |\wi\mfrak_{\rho_k}(t)|^2  \wi\qfrak_{\rho_k}(t)   , \boldphi \rangle g(t) \d t 
		=  	 \int_{0}^{T}	\langle      |\wi\mfrak (t)|^2 \wi\qfrak (t)   , \boldphi \rangle g(t) \d t.
\end{align}

Since $ \langle  -\B(\wi\mfrak, \wi\qfrak_{\rho_k})  +       \sum_{j=1}^3 [\nabla(\wi\mfrak_{\rho_k})_j](\wi\qfrak_{\rho_k})_j    , \boldphi  \rangle = b( \wi\mfrak, \boldphi  , \wi\qfrak_{\rho_k}   ) +   b(\boldphi, \wi\mfrak_{\rho_k} , \wi\qfrak_{\rho_k}  )  $, we consider
\begin{align}\label{eqn-417}
 &  	   \int_{0}^{T}	[b( \wi\mfrak  (t),\boldphi, \wi\qfrak_{\rho_k}(t) ) +   b(\boldphi, \wi\mfrak_{\rho_k} (t), \wi\qfrak_{\rho_k}(t)   )] g(t) \d t - \int_{0}^{T}	[b( \wi\mfrak  (t),\boldphi, \wi\qfrak (t) ) +   b(\boldphi, \wi\mfrak  (t), \wi\qfrak (t)   )] g(t) \d t   
 \nonumber\\ & = \underbrace{\int_{0}^{T}	b( \wi\mfrak  (t),\boldphi, \wi\qfrak_{\rho_k}(t) - \wi\qfrak (t) ) g(t) \d t}_{I_{4,k}} + \underbrace{\int_{0}^{T}	b(\boldphi, \wi\mfrak_{\rho_k} (t) - \wi\mfrak  (t), \wi\qfrak_{\rho_k}(t)   ) g(t) \d t }_{I_{5,k}}
 \nonumber\\ & \quad + \underbrace{\int_{0}^{T}	b( \boldphi,\wi\mfrak  (t), \wi\qfrak_{\rho_k}(t) - \wi\qfrak(t) ) g(t) \d t}_{I_{6,k}}.
\end{align}
Taking into account the weak convergence $\eqref{eqn-convergence-rho}_1$, and the fact that $\boldphi\in\V$, $g\in C_c^{\infty}((0,+\infty))$, $\wi\mfrak\in \Lrm^4(0,T;\wi\L^4)$, it is immediate that $\lim\limits_{k\to \infty}I_{4,k}=\lim\limits_{k\to \infty}I_{6,k}= 0.$ Moreover, using the fact that $\{\wi\qfrak_{\rho_k}\}_{k\in\N}$ is uniformly bounded in $\Lrm^2(0,T;\V)$, $\boldphi\in\V$, $g\in C_c^{\infty}((0,+\infty))$ and $\wi\mfrak_{\rho_k}\to \wi\mfrak$ in $\Lrm^4(0,T;\wi\L^4)$, we obtain that $\lim\limits_{k\to \infty}I_{5,k}= 0.$ Therefore, we have from \eqref{eqn-417} that
\begin{align}
    &\lim_{k\to\infty}   \int_{0}^{T}	[b( \wi\mfrak  (t),\boldphi, \wi\qfrak_{\rho_k}(t) ) +   b(\boldphi, \wi\mfrak_{\rho_k} (t), \wi\qfrak_{\rho_k}(t)   )] g(t) \d t 
    \nonumber\\ & =  \int_{0}^{T}	[b( \wi\mfrak  (t),\boldphi, \wi\qfrak (t) ) +   b(\boldphi, \wi\mfrak  (t), \wi\qfrak (t)   )] g(t) \d t.
\end{align}
Finally, we consider
\begin{align}
&	\int_{0}^{T}	\langle   [(\wi\mfrak (t) + \wi\mfrak_{\rho_k}(t))\cdot\wi\qfrak_{\rho_k}(t)](\wi\mfrak(t)+\wi\mfrak_{\rho_k}(t)), \boldphi \rangle g(t) \d t  - 4 \int_{0}^{T}	\langle   [ \wi\mfrak (t)\cdot\wi\qfrak (t)]\wi\mfrak(t), \boldphi \rangle g(t) \d t 
	\nonumber\\ & =\underbrace{ 2 \int_{0}^{T}	\langle   \wi\mfrak (t)\cdot\wi\qfrak_{\rho_k}(t)(\wi\mfrak_{\rho_k}(t)-\wi\mfrak(t)), \boldphi \rangle g(t) \d t}_{I_{7,k}} 
    \nonumber\\ & \quad +  \underbrace{\int_{0}^{T}	\langle   [ (\wi\mfrak_{\rho_k}(t)-\wi\mfrak(t))\cdot\wi\qfrak_{\rho_k} (t)](\wi\mfrak(t)+\wi\mfrak_{\rho_k}(t)), \boldphi \rangle g(t) \d t }_{I_{8,k}}
	\nonumber\\ & \quad + \underbrace{4 \int_{0}^{T}	\langle   [ \wi\mfrak (t)\cdot(\wi\qfrak_{\rho_k} (t) - \wi\qfrak (t))]\wi\mfrak(t), \boldphi \rangle g(t) \d t }_{I_{9,k}}.\label{eqn-419}
\end{align}
 Making use of the weak convergence $\eqref{eqn-convergence-rho}_5$, and the fact that $\boldphi\in\V$, $g\in C_c^{\infty}((0,+\infty))$, $\wi\mfrak\in \Lrm^4(0,T;\wi\L^4)$, it is immediate that $\lim\limits_{k\to \infty}I_{9,k}= 0.$ Moreover, using the fact that $\{|\wi\mfrak|\wi\qfrak_{\rho_k}, \; |\wi\mfrak_{\rho_k}|\wi\qfrak_{\rho_k} \}_{k\in\N}$ is uniformly bounded in $\Lrm^2(0,T; \L^2(\mathfrak{D}))$, $\boldphi\in\V$, $g\in C_c^{\infty}((0,+\infty))$ and $\wi\mfrak_{\rho_k}\to \wi\mfrak$ in $\Lrm^4(0,T;\wi\L^4)$, we obtain that $\lim\limits_{k\to \infty}I_{7,k}= \lim\limits_{k\to \infty}I_{8,k}= 0.$ Therefore, from  \eqref{eqn-419} we have 
 \begin{align}
 	\lim_{k\to\infty}   \int_{0}^{T}	\langle   [(\wi\mfrak (t) + \wi\mfrak_{\rho_k}(t))\cdot\wi\qfrak_{\rho_k}(t)](\wi\mfrak(t)+\wi\mfrak_{\rho_k}(t)), \boldphi \rangle g(t) \d t  = 4 \int_{0}^{T}	\langle   [ \wi\mfrak (t)\cdot\wi\qfrak (t)]\wi\mfrak(t), \boldphi \rangle g(t) \d t .
 \end{align}
Hence passing to the limit to the weak formulation \eqref{eqn-49}, we infer that $\wi\qfrak$ satisfies:
\begin{align} 
	&\int_{0}^{T}	\bigg\langle -\frac{\d\wi\qfrak(t)}{\d t} + \mu \A \wi\qfrak (t) - \B(\wi\mfrak(t), \wi\qfrak (t))  +  [ \sum_{j=1}^3 [\nabla(\wi\mfrak (t))_j](\wi\qfrak (t))_j ]   + \alpha\wi\qfrak (t)   
 +  \beta   |\wi\mfrak(t)|^2  \wi\qfrak (t)   
   	\nonumber\\ & + 4  [\wi\mfrak (t) \cdot\wi\qfrak (t)] \wi\mfrak(t), \boldphi\bigg\rangle g(t) \d t 
	  = \int_{0}^{T} (\wi\mfrak(t) - \mfrak_d(t), \boldphi) g(t) \d t.
\end{align} 
We conclude that $\wi\qfrak$ is a weak solution of system \eqref{eqn-adjoint-NOC} in the sense of Definition \ref{def-main-adjoint}. Considering \eqref{eqn-rho-diff} and passing to the  limit in  \eqref{eqn-Duality-rho} as $\rho=\rho_k\to 0$, we get \eqref{eqn-Duality}, as desired. This completes the proof.

\vskip 2mm
\noindent
\textbf{Acknowledgments:} “This work is funded by national funds through the FCT – Fundação para a Ciência e a Tecnologia, I.P., under the scope of the projects UIDB/00297/2020 (https://doi.org/ 10.54499/UIDB/00297/2020) and UIDP/00297/2020 (https://doi.org/10.54499/UIDP/00297/ 2020) (Center for Mathematics and Applications)”.

\medskip\noindent
\textbf{Data availability:} No data was used for the research described in the article.

\medskip\noindent
\textbf{Declarations}: During the preparation of this work, the authors have not used AI tools.

\medskip\noindent
\textbf{Conflict of interest:} The authors declare no conflict of interest.

\centering
\bibliographystyle{abbrv}

\end{document}